\newtheorem{theorem}{Theorem}[section]
\newtheorem{lemma}[theorem]{Lemma}
\theoremstyle{definition}
\newtheorem{definition}[theorem]{Definition}
\theoremstyle{proposition}
\newtheorem{proposition}[theorem]{Proposition}
\newtheorem{example}[theorem]{Example}
\theoremstyle{remark}
\newtheorem{remark}[theorem]{Remark}
\numberwithin{equation}{section}
\let\pdfoutput=\undefined\fi
\chardef\@x10\chardef\@xv60
\def\tcitime{
\def\@time{%
  \@minute\time\@hour\@minute\divide\@hour\@xv
  \ifnum\@hour<\@x 0\fi\the\@hour:%
  \multiply\@hour\@xv\advance\@minute-\@hour
  \ifnum\@minute<\@x 0\fi\the\@minute
  }}%
\def\x@hyperref#1#2#3{%
   \catcode`\~ = 12
   \catcode`\$ = 12
   \catcode`\_ = 12
   \catcode`\# = 12
   \catcode`\& = 12
   \y@hyperref{#1}{#2}{#3}%
}
\def\y@hyperref#1#2#3#4{%
   #2\ref{#4}#3
   \catcode`\~ = 13
   \catcode`\$ = 3
   \catcode`\_ = 8
   \catcode`\# = 6
   \catcode`\& = 4
}
\def\QCTOpt[#1]#2{%
  \def\QCTOptB{#1}
  \def\QCTOptA{#2}
}
\def\QCTNOpt#1{%
  \def\QCTOptA{#1}
  \let\QCTOptB\empty
}
\def\Qct{%
  \@ifnextchar[{%
    \QCTOpt}{\QCTNOpt}
}
\def\QCBOpt[#1]#2{%
  \def\QCBOptB{#1}%
  \def\QCBOptA{#2}%
}
\def\QCBNOpt#1{%
  \def\QCBOptA{#1}%
  \let\QCBOptB\empty
}
\def\Qcb{%
  \@ifnextchar[{%
    \QCBOpt}{\QCBNOpt}%
}
\def\PrepCapArgs{%
  \ifx\QCBOptA\empty
    \ifx\QCTOptA\empty
      {}%
    \else
      \ifx\QCTOptB\empty
        {\QCTOptA}%
      \else
        [\QCTOptB]{\QCTOptA}%
      \fi
    \fi
  \else
    \ifx\QCBOptA\empty
      {}%
    \else
      \ifx\QCBOptB\empty
        {\QCBOptA}%
      \else
        [\QCBOptB]{\QCBOptA}%
      \fi
    \fi
  \fi
}
\def\GRAPHICSPS#1{%
 \ifcase\GRAPHICSTYPE
   \special{ps: #1}%
 \or
   \special{language "PS", include "#1"}%
 \fi
}%
\def\graffile#1#2#3#4{%
    \bgroup
	   \@inlabelfalse
       \leavevmode
       \@ifundefined{bbl@deactivate}{\def~{\string~}}{\activesoff}%
        \raise -#4 \BOXTHEFRAME{%
           \hbox to #2{\raise #3\hbox to #2{\null #1\hfil}}}%
    \egroup
}%
\def\draftbox#1#2#3#4{%
 \leavevmode\raise -#4 \hbox{%
  \frame{\rlap{\protect\tiny #1}\hbox to #2%
   {\vrule height#3 width\z@ depth\z@\hfil}%
  }%
 }%
}%
\let\nographics=\@msidraft
\newif\ifwasdraft
\def\GRAPHIC#1#2#3#4#5{%
   \ifnum\@msidraft=\@ne\draftbox{#2}{#3}{#4}{#5}%
   \else\graffile{#1}{#3}{#4}{#5}%
   \fi
}
\def\addtoLaTeXparams#1{%
    \edef\LaTeXparams{\LaTeXparams #1}}%
\newif\ifBoxFrame \BoxFramefalse
\newif\ifOverFrame \OverFramefalse
\newif\ifUnderFrame \UnderFramefalse
\def\BOXTHEFRAME#1{%
   \hbox{%
      \ifBoxFrame
         \frame{#1}%
      \else
         {#1}%
      \fi
   }%
}
\def\doFRAMEparams#1{\BoxFramefalse\OverFramefalse\UnderFramefalse\readFRAMEparams#1\end}%
\def\readFRAMEparams#1{%
 \ifx#1\end%
  \let\next=\relax
  \else
  \ifx#1i\dispkind=\z@\fi
  \ifx#1d\dispkind=\@ne\fi
  \ifx#1f\dispkind=\tw@\fi
  \ifx#1t\addtoLaTeXparams{t}\fi
  \ifx#1b\addtoLaTeXparams{b}\fi
  \ifx#1p\addtoLaTeXparams{p}\fi
  \ifx#1h\addtoLaTeXparams{h}\fi
  \ifx#1X\BoxFrametrue\fi
  \ifx#1O\OverFrametrue\fi
  \ifx#1U\UnderFrametrue\fi
  \ifx#1w
    \ifnum\@msidraft=1\wasdrafttrue\else\wasdraftfalse\fi
    \@msidraft=\@ne
  \fi
  \let\next=\readFRAMEparams
  \fi
 \next
 }%
\def\IFRAME#1#2#3#4#5#6{%
      \bgroup
      \let\QCTOptA\empty
      \let\QCTOptB\empty
      \let\QCBOptA\empty
      \let\QCBOptB\empty
      #6%
      \parindent=0pt
      \leftskip=0pt
      \rightskip=0pt
      \setbox0=\hbox{\QCBOptA}%
      \@tempdima=#1\relax
      \ifOverFrame
          \typeout{This is not implemented yet}%
          \show\HELP
      \else
         \ifdim\wd0>\@tempdima
            \advance\@tempdima by \@tempdima
            \ifdim\wd0 >\@tempdima
               \setbox1 =\vbox{%
                  \unskip\hbox to \@tempdima{\hfill\GRAPHIC{#5}{#4}{#1}{#2}{#3}\hfill}%
                  \unskip\hbox to \@tempdima{\parbox[b]{\@tempdima}{\QCBOptA}}%
               }%
               \wd1=\@tempdima
            \else
               \textwidth=\wd0
               \setbox1 =\vbox{%
                 \noindent\hbox to \wd0{\hfill\GRAPHIC{#5}{#4}{#1}{#2}{#3}\hfill}\\%
                 \noindent\hbox{\QCBOptA}%
               }%
               \wd1=\wd0
            \fi
         \else
            \ifdim\wd0>0pt
              \hsize=\@tempdima
              \setbox1=\vbox{%
                \unskip\GRAPHIC{#5}{#4}{#1}{#2}{0pt}%
                \break
                \unskip\hbox to \@tempdima{\hfill \QCBOptA\hfill}%
              }%
              \wd1=\@tempdima
           \else
              \hsize=\@tempdima
              \setbox1=\vbox{%
                \unskip\GRAPHIC{#5}{#4}{#1}{#2}{0pt}%
              }%
              \wd1=\@tempdima
           \fi
         \fi
         \@tempdimb=\ht1
         \advance\@tempdimb by -#2
         \advance\@tempdimb by #3
         \leavevmode
         \raise -\@tempdimb \hbox{\box1}%
      \fi
      \egroup%
}%
\def\DFRAME#1#2#3#4#5{%
  \vspace\topsep
  \hfil\break
  \bgroup
     \leftskip\@flushglue
	 \rightskip\@flushglue
	 \parindent\z@
	 \parfillskip\z@skip
     \let\QCTOptA\empty
     \let\QCTOptB\empty
     \let\QCBOptA\empty
     \let\QCBOptB\empty
	 \vbox\bgroup
        \ifOverFrame 
           #5\QCTOptA\par
        \fi
        \GRAPHIC{#4}{#3}{#1}{#2}{\z@}%
        \ifUnderFrame 
           \break#5\QCBOptA
        \fi
	 \egroup
  \egroup
  \vspace\topsep
  \break
}%
\def\FFRAME#1#2#3#4#5#6#7{%
  \@ifundefined{floatstyle}
    {
     \begin{figure}[#1]%
    }
    {
	 \ifx#1h
      \begin{figure}[H]%
	 \else
      \begin{figure}[#1]%
	 \fi
	}
  \let\QCTOptA\empty
  \let\QCTOptB\empty
  \let\QCBOptA\empty
  \let\QCBOptB\empty
  \ifOverFrame
    #4
    \ifx\QCTOptA\empty
    \else
      \ifx\QCTOptB\empty
        \caption{\QCTOptA}%
      \else
        \caption[\QCTOptB]{\QCTOptA}%
      \fi
    \fi
    \ifUnderFrame\else
      \label{#5}%
    \fi
  \else
    \UnderFrametrue%
  \fi
  \begin{center}\GRAPHIC{#7}{#6}{#2}{#3}{\z@}\end{center}%
  \ifUnderFrame
    #4
    \ifx\QCBOptA\empty
      \caption{}%
    \else
      \ifx\QCBOptB\empty
        \caption{\QCBOptA}%
      \else
        \caption[\QCBOptB]{\QCBOptA}%
      \fi
    \fi
    \label{#5}%
  \fi
  \end{figure}%
 }%
\def\makeactives{
  \catcode`\"=\active
  \catcode`\;=\active
  \catcode`\:=\active
  \catcode`\'=\active
  \catcode`\~=\active
}
   \gdef\activesoff{%
      \def"{\string"}%
      \def;{\string;}%
      \def:{\string:}%
      \def'{\string'}%
      \def~{\string~}%
    }
\def\FRAME#1#2#3#4#5#6#7#8{%
 \bgroup
 \ifnum\@msidraft=\@ne
   \wasdrafttrue
 \else
   \wasdraftfalse%
 \fi
 \def\LaTeXparams{}%
 \dispkind=\z@
 \def\LaTeXparams{}%
 \doFRAMEparams{#1}%
 \ifnum\dispkind=\z@\IFRAME{#2}{#3}{#4}{#7}{#8}{#5}\else
  \ifnum\dispkind=\@ne\DFRAME{#2}{#3}{#7}{#8}{#5}\else
   \ifnum\dispkind=\tw@
    \edef\@tempa{\noexpand\FFRAME{\LaTeXparams}}%
    \@tempa{#2}{#3}{#5}{#6}{#7}{#8}%
    \fi
   \fi
  \fi
  \ifwasdraft\@msidraft=1\else\@msidraft=0\fi{}%
  \egroup
 }%
\def\TEXUX#1{"texux"}
\long\def\QQQ#1#2{%
     \long\expandafter\def\csname#1\endcsname{#2}}%
\long\def\QQA#1#2{}%
\def\QTR#1#2{{\csname#1\endcsname {#2}}}%
\def\EXPAND#1[#2]#3{}%
\def\NOEXPAND#1[#2]#3{}%
\def\LaTeXparent#1{}%
\def\ChildStyles#1{}%
\def\ChildDefaults#1{}%
\def\QTagDef#1#2#3{}%
  \providecommand{\UNICODE}[2][]{\protect\rule{.1in}{.1in}}
  \providecommand{\U}[1]{\protect\rule{.1in}{.1in}}
\def\QQfnmark#1{\footnotemark}
 \def\abstract{%
  \if@twocolumn
   \section*{Abstract (Not appropriate in this style!)}%
   \else \small 
   \begin{center}{\bf Abstract\vspace{-.5em}\vspace{\z@}}\end{center}%
   \quotation 
   \fi
  }%
   \def\registered{\relax\ifmmode{}\r@gistered
                    \else$\m@th\r@gistered$\fi}%
 \def\r@gistered{^{\ooalign
  {\hfil\raise.07ex\hbox{$\scriptstyle\rm\text{R}$}\hfil\crcr
  \mathhexbox20D}}}}{}%
\newdimen\theight
\def\newfmtname{LaTeX2e}
  \DeclareOldFontCommand{\rm}{\normalfont\rmfamily}{\mathrm}
  \DeclareOldFontCommand{\sf}{\normalfont\sffamily}{\mathsf}
  \DeclareOldFontCommand{\tt}{\normalfont\ttfamily}{\mathtt}
  \DeclareOldFontCommand{\bf}{\normalfont\bfseries}{\mathbf}
  \DeclareOldFontCommand{\it}{\normalfont\itshape}{\mathit}
  \DeclareOldFontCommand{\sl}{\normalfont\slshape}{\@nomath\sl}
  \DeclareOldFontCommand{\sc}{\normalfont\scshape}{\@nomath\sc}
\def\alpha{{\Greekmath 010B}}%
\def\beta{{\Greekmath 010C}}%
\def\gamma{{\Greekmath 010D}}%
\def\delta{{\Greekmath 010E}}%
\def\epsilon{{\Greekmath 010F}}%
\def\zeta{{\Greekmath 0110}}%
\def\eta{{\Greekmath 0111}}%
\def\theta{{\Greekmath 0112}}%
\def\iota{{\Greekmath 0113}}%
\def\kappa{{\Greekmath 0114}}%
\def\lambda{{\Greekmath 0115}}%
\def\mu{{\Greekmath 0116}}%
\def\nu{{\Greekmath 0117}}%
\def\xi{{\Greekmath 0118}}%
\def\pi{{\Greekmath 0119}}%
\def\rho{{\Greekmath 011A}}%
\def\sigma{{\Greekmath 011B}}%
\def\tau{{\Greekmath 011C}}%
\def\upsilon{{\Greekmath 011D}}%
\def\phi{{\Greekmath 011E}}%
\def\chi{{\Greekmath 011F}}%
\def\psi{{\Greekmath 0120}}%
\def\omega{{\Greekmath 0121}}%
\def\varepsilon{{\Greekmath 0122}}%
\def\vartheta{{\Greekmath 0123}}%
\def\varpi{{\Greekmath 0124}}%
\def\varrho{{\Greekmath 0125}}%
\def\varsigma{{\Greekmath 0126}}%
\def\varphi{{\Greekmath 0127}}%
\def\nabla{{\Greekmath 0272}}
\def\FindBoldGroup{%
   {\setbox0=\hbox{$\mathbf{x\global\edef\theboldgroup{\the\mathgroup}}$}}%
}
\def\Greekmath#1#2#3#4{%
    \if@compatibility
        \ifnum\mathgroup=\symbold
           \mathchoice{\mbox{\boldmath$\displaystyle\mathchar"#1#2#3#4$}}%
                      {\mbox{\boldmath$\textstyle\mathchar"#1#2#3#4$}}%
                      {\mbox{\boldmath$\scriptstyle\mathchar"#1#2#3#4$}}%
                      {\mbox{\boldmath$\scriptscriptstyle\mathchar"#1#2#3#4$}}%
        \else
           \mathchar"#1#2#3#4%
        \fi 
    \else 
        \FindBoldGroup
        \ifnum\mathgroup=\theboldgroup 
           \mathchoice{\mbox{\boldmath$\displaystyle\mathchar"#1#2#3#4$}}%
                      {\mbox{\boldmath$\textstyle\mathchar"#1#2#3#4$}}%
                      {\mbox{\boldmath$\scriptstyle\mathchar"#1#2#3#4$}}%
                      {\mbox{\boldmath$\scriptscriptstyle\mathchar"#1#2#3#4$}}%
        \else
           \mathchar"#1#2#3#4%
        \fi     	    
	  \fi}
\newif\ifGreekBold  \GreekBoldfalse
\let\SAVEPBF=\pbf
\def\pbf{\GreekBoldtrue\SAVEPBF}%
  \newcounter{equationnumber}  
  \def\mathletters{%
     \addtocounter{equation}{1}
     \edef\@currentlabel{\theequation}%
     \setcounter{equationnumber}{\c@equation}
     \setcounter{equation}{0}%
     \edef\theequation{\@currentlabel\noexpand\alph{equation}}%
  }
    \def\BibTeX{{\rm B\kern-.05em{\sc i\kern-.025em b}\kern-.08em
                 T\kern-.1667em\lower.7ex\hbox{E}\kern-.125emX}}}{}%
\def\AmS{{\protect\usefont{OMS}{cmsy}{m}{n}%
                A\kern-.1667em\lower.5ex\hbox{M}\kern-.125emS}}}{}%
\def\@@eqncr{\let\@tempa\relax
    \ifcase\@eqcnt \def\@tempa{& & &}\or \def\@tempa{& &}%
      \else \def\@tempa{&}\fi
     \@tempa
     \if@eqnsw
        \iftag@
           \@taggnum
        \else
           \@eqnnum\stepcounter{equation}%
        \fi
     \fi
     \global\tag@false
     \global\@eqnswtrue
     \global\@eqcnt\z@\cr}
\def\TCItag{\@ifnextchar*{\@TCItagstar}{\@TCItag}}
\def\@TCItag#1{%
    \global\tag@true
    \global\def\@taggnum{(#1)}%
    \global\def\@currentlabel{#1}}
\def\@TCItagstar*#1{%
    \global\tag@true
    \global\def\@taggnum{#1}%
    \global\def\@currentlabel{#1}}
\def\ExitTCILatex{\makeatother }
\if@compatibility\message{amsmath already loaded}\fi\aftergroup\ExitTCILatex}
\if@compatibility\message{amstex already loaded}\fi\aftergroup\ExitTCILatex}
\if@compatibility\message{amsgen already loaded}\fi\aftergroup\ExitTCILatex}
\let\DOTSI\relax
\def\RIfM@{\relax\ifmmode}%
\def\FN@{\futurelet\next}%
\def\iint{\DOTSI\intno@\tw@\FN@\ints@}%
\def\iiint{\DOTSI\intno@\thr@@\FN@\ints@}%
\def\iiiint{\DOTSI\intno@4 \FN@\ints@}%
\def\idotsint{\DOTSI\intno@\z@\FN@\ints@}%
\def\ints@{\findlimits@\ints@@}%
\newif\iflimtoken@
\newif\iflimits@
\def\findlimits@{\limtoken@true\ifx\next\limits\limits@true
 \else\ifx\next\nolimits\limits@false\else
 \limtoken@false\ifx\ilimits@\nolimits\limits@false\else
 \ifinner\limits@false\else\limits@true\fi\fi\fi\fi}%
\def\multint@{\int\ifnum\intno@=\z@\intdots@                          
 \else\intkern@\fi                                                    
 \ifnum\intno@>\tw@\int\intkern@\fi                                   
 \ifnum\intno@>\thr@@\int\intkern@\fi                                 
 \int}
\def\multintlimits@{\intop\ifnum\intno@=\z@\intdots@\else\intkern@\fi
 \ifnum\intno@>\tw@\intop\intkern@\fi
 \ifnum\intno@>\thr@@\intop\intkern@\fi\intop}%
\def\intic@{%
    \mathchoice{\hskip.5em}{\hskip.4em}{\hskip.4em}{\hskip.4em}}%
\def\negintic@{\mathchoice
 {\hskip-.5em}{\hskip-.4em}{\hskip-.4em}{\hskip-.4em}}%
\def\ints@@{\iflimtoken@                                              
 \def\ints@@@{\iflimits@\negintic@
   \mathop{\intic@\multintlimits@}\limits                             
  \else\multint@\nolimits\fi                                          
  \eat@}
 \else                                                                
 \def\ints@@@{\iflimits@\negintic@
  \mathop{\intic@\multintlimits@}\limits\else
  \multint@\nolimits\fi}\fi\ints@@@}%
\def\intkern@{\mathchoice{\!\!\!}{\!\!}{\!\!}{\!\!}}%
\def\plaincdots@{\mathinner{\cdotp\cdotp\cdotp}}%
\def\intdots@{\mathchoice{\plaincdots@}%
 {{\cdotp}\mkern1.5mu{\cdotp}\mkern1.5mu{\cdotp}}%
 {{\cdotp}\mkern1mu{\cdotp}\mkern1mu{\cdotp}}%
 {{\cdotp}\mkern1mu{\cdotp}\mkern1mu{\cdotp}}}%
\def\RIfM@{\relax\protect\ifmmode}
\def\text{\RIfM@\expandafter\text@\else\expandafter\mbox\fi}
\let\nfss@text\text
\def\text@#1{\mathchoice
   {\textdef@\displaystyle\f@size{#1}}%
   {\textdef@\textstyle\tf@size{\firstchoice@false #1}}%
   {\textdef@\textstyle\sf@size{\firstchoice@false #1}}%
   {\textdef@\textstyle \ssf@size{\firstchoice@false #1}}%
   \glb@settings}
\def\textdef@#1#2#3{\hbox{{%
                    \everymath{#1}%
                    \let\f@size#2\selectfont
                    #3}}}
\newif\iffirstchoice@
\def\Let@{\relax\iffalse{\fi\let\\=\cr\iffalse}\fi}%
\def\vspace@{\def\vspace##1{\crcr\noalign{\vskip##1\relax}}}%
\def\multilimits@{\bgroup\vspace@\Let@
 \baselineskip\fontdimen10 \scriptfont\tw@
 \advance\baselineskip\fontdimen12 \scriptfont\tw@
 \lineskip\thr@@\fontdimen8 \scriptfont\thr@@
 \lineskiplimit\lineskip
 \vbox\bgroup\ialign\bgroup\hfil$\m@th\scriptstyle{##}$\hfil\crcr}%
\def\Sb{_\multilimits@}%
\def\endSb{\crcr\egroup\egroup\egroup}%
\def\Sp{^\multilimits@}%
\newdimen\ex@
\def\rightarrowfill@#1{$#1\m@th\mathord-\mkern-6mu\cleaders
 \hbox{$#1\mkern-2mu\mathord-\mkern-2mu$}\hfill
 \mkern-6mu\mathord\rightarrow$}%
\def\leftarrowfill@#1{$#1\m@th\mathord\leftarrow\mkern-6mu\cleaders
 \hbox{$#1\mkern-2mu\mathord-\mkern-2mu$}\hfill\mkern-6mu\mathord-$}%
\def\leftrightarrowfill@#1{$#1\m@th\mathord\leftarrow
\mkern-6mu\cleaders
 \hbox{$#1\mkern-2mu\mathord-\mkern-2mu$}\hfill
 \mkern-6mu\mathord\rightarrow$}%
\def\overrightarrow{\mathpalette\overrightarrow@}%
\def\overrightarrow@#1#2{\vbox{\ialign{##\crcr\rightarrowfill@#1\crcr
 \noalign{\kern-\ex@\nointerlineskip}$\m@th\hfil#1#2\hfil$\crcr}}}%
\def\overleftarrow{\mathpalette\overleftarrow@}%
\def\overleftarrow@#1#2{\vbox{\ialign{##\crcr\leftarrowfill@#1\crcr
 \noalign{\kern-\ex@\nointerlineskip}$\m@th\hfil#1#2\hfil$\crcr}}}%
\def\overleftrightarrow{\mathpalette\overleftrightarrow@}%
\def\overleftrightarrow@#1#2{\vbox{\ialign{##\crcr
   \leftrightarrowfill@#1\crcr
 \noalign{\kern-\ex@\nointerlineskip}$\m@th\hfil#1#2\hfil$\crcr}}}%
\def\underrightarrow{\mathpalette\underrightarrow@}%
\def\underrightarrow@#1#2{\vtop{\ialign{##\crcr$\m@th\hfil#1#2\hfil
  $\crcr\noalign{\nointerlineskip}\rightarrowfill@#1\crcr}}}%
\def\underleftarrow{\mathpalette\underleftarrow@}%
\def\underleftarrow@#1#2{\vtop{\ialign{##\crcr$\m@th\hfil#1#2\hfil
  $\crcr\noalign{\nointerlineskip}\leftarrowfill@#1\crcr}}}%
\def\underleftrightarrow{\mathpalette\underleftrightarrow@}%
\def\underleftrightarrow@#1#2{\vtop{\ialign{##\crcr$\m@th
  \hfil#1#2\hfil$\crcr
 \noalign{\nointerlineskip}\leftrightarrowfill@#1\crcr}}}%
\def\qopnamewl@#1{\mathop{\operator@font#1}\nlimits@}
\let\nlimits@\displaylimits
\def\setboxz@h{\setbox\z@\hbox}
\def\varlim@#1#2{\mathop{\vtop{\ialign{##\crcr
 \hfil$#1\m@th\operator@font lim$\hfil\crcr
 \noalign{\nointerlineskip}#2#1\crcr
 \noalign{\nointerlineskip\kern-\ex@}\crcr}}}}
 \def\rightarrowfill@#1{\m@th\setboxz@h{$#1-$}\ht\z@\z@
  $#1\copy\z@\mkern-6mu\cleaders
  \hbox{$#1\mkern-2mu\box\z@\mkern-2mu$}\hfill
  \mkern-6mu\mathord\rightarrow$}
\def\leftarrowfill@#1{\m@th\setboxz@h{$#1-$}\ht\z@\z@
  $#1\mathord\leftarrow\mkern-6mu\cleaders
  \hbox{$#1\mkern-2mu\copy\z@\mkern-2mu$}\hfill
  \mkern-6mu\box\z@$}
\def\projlim{\qopnamewl@{proj\,lim}}
\def\injlim{\qopnamewl@{inj\,lim}}
\def\varinjlim{\mathpalette\varlim@\rightarrowfill@}
\def\varprojlim{\mathpalette\varlim@\leftarrowfill@}
\def\varliminf{\mathpalette\varliminf@{}}
\def\varliminf@#1{\mathop{\underline{\vrule\@depth.2\ex@\@width\z@
   \hbox{$#1\m@th\operator@font lim$}}}}
\def\varlimsup{\mathpalette\varlimsup@{}}
\def\varlimsup@#1{\mathop{\overline
  {\hbox{$#1\m@th\operator@font lim$}}}}
\def\align{\@verbatim \frenchspacing\@vobeyspaces \@alignverbatim
You are using the "align" environment in a style in which it is not defined.}
\let\csname endalign*\endcsname =\endtrivlist
\def\alignat{\@verbatim \frenchspacing\@vobeyspaces \@alignatverbatim
You are using the "alignat" environment in a style in which it is not defined.}
\let\csname endalignat*\endcsname =\endtrivlist
\def\xalignat{\@verbatim \frenchspacing\@vobeyspaces \@xalignatverbatim
You are using the "xalignat" environment in a style in which it is not defined.}
\let\csname endxalignat*\endcsname =\endtrivlist
\def\gather{\@verbatim \frenchspacing\@vobeyspaces \@gatherverbatim
You are using the "gather" environment in a style in which it is not defined.}
\let\csname endgather*\endcsname =\endtrivlist
\def\multiline{\@verbatim \frenchspacing\@vobeyspaces \@multilineverbatim
You are using the "multiline" environment in a style in which it is not defined.}
\let\csname endmultiline*\endcsname =\endtrivlist
\def\arrax{\@verbatim \frenchspacing\@vobeyspaces \@arraxverbatim
You are using a type of "array" construct that is only allowed in AmS-LaTeX.}
\def\tabulax{\@verbatim \frenchspacing\@vobeyspaces \@tabulaxverbatim
You are using a type of "tabular" construct that is only allowed in AmS-LaTeX.}
\let\csname endarrax*\endcsname =\endtrivlist
\let\csname endtabulax*\endcsname =\endtrivlist
 \def\endequation{%
     \ifmmode\ifinner 
      \iftag@
        \addtocounter{equation}{-1} 
        $\hfil
           \displaywidth\linewidth\@taggnum\egroup \endtrivlist
        \global\tag@false
        \global\@ignoretrue   
      \else
        $\hfil
           \displaywidth\linewidth\@eqnnum\egroup \endtrivlist
        \global\tag@false
        \global\@ignoretrue 
      \fi
     \else   
      \iftag@
        \addtocounter{equation}{-1} 
        \eqno \hbox{\@taggnum}
        \global\tag@false%
        $$\global\@ignoretrue
      \else
        \eqno \hbox{\@eqnnum}
        $$\global\@ignoretrue
      \fi
     \fi\fi
 } 
 \newif\iftag@ \tag@false
 \def\TCItag{\@ifnextchar*{\@TCItagstar}{\@TCItag}}
 \def\@TCItag#1{%
     \global\tag@true
     \global\def\@taggnum{(#1)}%
     \global\def\@currentlabel{#1}}
 \def\@TCItagstar*#1{%
     \global\tag@true
     \global\def\@taggnum{#1}%
     \global\def\@currentlabel{#1}}
     \def\tag{\@ifnextchar*{\@tagstar}{\@tag}}
     \def\@tag#1{%
         \global\tag@true
         \global\def\@taggnum{(#1)}}
     \def\@tagstar*#1{%
         \global\tag@true
         \global\def\@taggnum{#1}}
\begin{document}
\title{$G_{2}$-structure deformations and warped products}
\author[S. Grigorian]{Sergey Grigorian}
\address{Simons Center for Geometry and Physics\\
Stony Brook University\\
Stony Brook, NY 11794\\
USA}
\subjclass[2010]{Primary 53C10, 53C29; Secondary 53C80}
\date{September 26, 2011}

\begin{abstract}
We overview the properties of non-infinitesimal deformations of $G_{2}$%
-structures on seven-manifolds, and in particular, focus on deformations
that lie in the seven-dimensional representation of $G_{2}$ and are thus
defined by a vector. We then consider deformations from $G_{2}$-structures
with the torsion class having one-dimensional and seven-dimensional
components (so-called conformally nearly parallel $G_{2}$-manifolds) to $%
G_{2}$-structures with just a one-dimensional torsion component (nearly
parallel $G_{2}$-manifolds). We find that deformations between such
structures exist if and only if the metric is a particular warped product
metric.
\end{abstract}

\maketitle







\section{Introduction}

\setcounter{equation}{0}One of the most general geometric structures that
can be constructed on a $7$-dimensional manifold is a $G_{2}$-structure. A $%
G_{2}$-structure can be considered as a generalization of the vector cross
product on $\mathbb{R}^{7}$ \cite{Gray-VCP}. It is well-known that a $7$%
-manifold admits a $G_{2}$-structure if and only if it is orientable and
admits a spin structure, or equivalently, if the first two Stiefel-Whitney
classes vanish \cite{FernandezGray, FriedrichNPG2}. A very important special
case of a $G_{2}$-structure is a torsion-free $G_{2}$-structure. This
implies that the holonomy group lies in $G_{2}$. In Section \ref{secg2struct}
we give a more precise definition and an overview of the properties of $G_{2}
$-structures. The concept of $G_{2}$-structures also has important
applications in physics - as shown in \cite{Kaste:2003zd}, the most general
backgrounds for $M$-theory compactifications with fluxes are indeed $7$%
-manifolds with $G_{2}$-structures with some particular torsion.

Given a $7$-manifold with a $G_{2}$-structure defined by the $3$-form $%
\varphi $ with torsion $T$, a natural question to ask is whether we can
modify this $3$-form to get a new $G_{2}$-structure with torsion that lies a
strictly lower torsion class. If $\varphi $ is deformed by a $3$-form lying
in the $7$-dimensional component of $\Lambda ^{3}$, it is easy to see that
such a deformation will always yield a new $G_{2}$-structure and in my paper 
\cite{GrigorianG2Torsion1}, I have explicitly calculated the new torsion in
terms of the old one, and the derived the equation that $v$ must satisfy to
take a torsion $T$ to torsion $\tilde{T}$. It was moreover shown that on
closed, compact manifolds there are no such deformations from strict torsion
classes $W_{1}$, $W_{7}$, $W_{1}\oplus W_{7}$ to the vanishing torsion class 
$W_{0}$, and vice versa.

In this paper we use the general results from \cite{GrigorianG2Torsion1} and
apply them to the situation when we want a transition from the torsion class 
$W_{1}\oplus W_{7}$ to $W_{1}$. The torsion class $W_{1}\oplus W_{7}$ is
known as \emph{conformally nearly parallel }because a conformal
transformation takes $W_{1}\oplus W_{7}$ to $W_{1}$. Here, however, we show
that there exists a deformation of $\varphi $ in $\Lambda _{7}^{3}$ that
takes a torsion in $W_{1}\oplus W_{7}$ to $W_{1}$ if and only if the metric
is a particular warped product. However $G_{2}$-structures with such torsion
have been constructed by Cleyton and Ivanov in \cite{CleytonIvanovConf} as a
warped product of an interval over a nearly K\"{a}hler manifold, so these
examples fit as solutions.

\section{$G_{2}$-structures}

\setcounter{equation}{0}\label{secg2struct}The 14-dimensional group $G_{2}$
is the smallest of the five exceptional Lie groups and is closely related to
the octonions. In particular, $G_{2}$ can be defined as the automorphism
group of the octonion algebra. Taking the imaginary part of octonion
multiplication of the imaginary octonions defines a vector cross product on $%
V=\mathbb{R}^{7}$ and the group that preserves the vector cross product is
precisely $G_{2}$. A more detailed account of the relationship between
octonions and $G_{2}$ can be found in \cite{BaezOcto, GrigorianG2Review}.The
structure constants of the vector cross product define a $3$-form on $%
\mathbb{R}^{7}$, hence $G_{2}$ can alternatively be defined as the subgroup
of $GL\left( 7,\mathbb{R}\right) $ that preserves a particular $3$-form \cite%
{Joycebook}. In general, given a $n$-dimensional manifold $M$, a $G$%
-structure on $M$ for some Lie subgroup $G$ of $GL\left( n,\mathbb{R}\right) 
$ is a reduction of the frame bundle $F$ over $M$ to a principal subbundle $P
$ with fibre $G$. A $G_{2}$-structure is then a reduction of the frame
bundle on a $7$-dimensional manifold $M$ to a $G_{2}$ principal subbundle.
It turns out that there is a $1$-$1$ correspondence between $G_{2}$%
-structures on a $7$-manifold and smooth $3$-forms $\varphi $ for which the $%
7$-form-valued bilinear form $B_{\varphi }$ as defined by (\ref{Bphi}) is
positive definite (for more details, see  \cite{Bryant-1987} and the arXiv
version of \cite{Hitchin:2000jd}).  
\begin{equation}
B_{\varphi }\left( u,v\right) =\frac{1}{6}\left( u\lrcorner \varphi \right)
\wedge \left( v\lrcorner \varphi \right) \wedge \varphi   \label{Bphi}
\end{equation}%
Here the symbol $\lrcorner $ denotes contraction of a vector with the
differential form: 
\begin{equation*}
\left( u\lrcorner \varphi \right) _{mn}=u^{a}\varphi _{amn}.
\end{equation*}%
Note that we will also use this symbol for contractions of differential
forms using the metric.

A smooth $3$-form $\varphi $ is said to be \emph{positive }if $B_{\varphi }$
is the tensor product of a positive-definite bilinear form and a
nowhere-vanishing $7$-form. In this case, it defines a unique metric $%
g_{\varphi }$ and volume form $\mathrm{vol}$ such that for vectors $u$ and $%
v $, the following holds 
\begin{equation}
g_{\varphi }\left( u,v\right) \mathrm{vol}=\frac{1}{6}\left( u\lrcorner
\varphi \right) \wedge \left( v\lrcorner \varphi \right) \wedge \varphi
\label{gphi}
\end{equation}

In components we can rewrite this as 
\begin{equation}
\left( g_{\varphi }\right) _{ab}=\left( \det s\right) ^{-\frac{1}{9}}s_{ab}\ 
\text{where \ }s_{ab}=\frac{1}{144}\varphi _{amn}\varphi _{bpq}\varphi _{rst}%
\hat{\varepsilon}^{mnpqrst}.  \label{metricdefdirect}
\end{equation}%
Here $\hat{\varepsilon}^{mnpqrst}$ is the alternating symbol with $\hat{%
\varepsilon}^{12...7}=+1$. Following Joyce (\cite{Joycebook}), we will adopt
the following definition

\begin{definition}
Let $M$ be an oriented $7$-manifold. The pair $\left( \varphi ,g\right) $
for a positive $3$-form $\varphi $ and corresponding metric $g$ defined by (%
\ref{gphi}) will be referred to as a $G_{2}$-structure.
\end{definition}

Since a $G_{2}$-structure defines a metric and an orientation, it also
defines a Hodge star. Thus we can construct another $G_{2}$-invariant object
- the $4$-form $\ast \varphi $. Since the Hodge star is defined by the
metric, which in turn is defined by $\varphi $, the $4$-form $\ast \varphi $
depends non-linearly on $\varphi $. For convenience we will usually denote $%
\ast \varphi $ by $\psi $.

For a general $G$-structure, the spaces of $p$-forms decompose according to
irreducible representations of $G$. Given a $G_{2}$-structure, $2$-forms
split as $\Lambda ^{2}=\Lambda _{7}^{2}\oplus \Lambda _{14}^{2}$, where $%
\Lambda _{7}^{2}=\left\{ \alpha \lrcorner \varphi \text{: for a vector field 
}\alpha \right\} $ and 
\begin{equation*}
\Lambda _{14}^{2}=\left\{ \omega \in \Lambda ^{2}\text{: }\left( \omega
_{ab}\right) \in \mathfrak{g}_{2}\right\} =\left\{ \omega \in \Lambda ^{2}%
\text{: }\omega \lrcorner \varphi =0\right\} .
\end{equation*}%
The $3$-forms split as $\Lambda ^{3}=\Lambda _{1}^{3}\oplus \Lambda
_{7}^{3}\oplus \Lambda _{27}^{3}$, where the one-dimensional component
consists of forms proportional to $\varphi $, forms in the $7$-dimensional
component are defined by a vector field $\Lambda _{7}^{3}=\left\{ \alpha
\lrcorner \psi \text{: for a vector field }\alpha \right\} $, and forms in
the $27$-dimensional component are defined by traceless, symmetric matrices: 
\begin{equation}
\Lambda _{27}^{3}=\left\{ \chi \in \Lambda ^{3}:\chi
_{abc}=h_{[a}^{d}\varphi _{bc]d}\text{ for }h_{ab}~\text{traceless, symmetric%
}\right\} .  \label{lam327}
\end{equation}%
By Hodge duality, similar decompositions exist for $\Lambda ^{4}$ and $%
\Lambda ^{5}$. A detailed description of these representations is given in 
\cite{Bryant-1987,bryant-2003}. Also, formulae for projections of
differential forms onto the various components are derived in detail in \cite%
{GrigorianG2Torsion1, GrigorianYau1, karigiannis-2007}.

The \emph{intrinsic torsion }of a $G_{2}$-structure is defined by $\nabla
\varphi $, where $\nabla $ is the Levi-Civita connection for the metric $g$
that is defined by $\varphi $. Following \cite{karigiannis-2007}, it is easy
to see 
\begin{equation}
\nabla \varphi \in \Lambda _{7}^{1}\otimes \Lambda _{7}^{3}\cong W.
\label{torsphiW}
\end{equation}%
Here we define $W$ as the space $\Lambda _{7}^{1}\otimes \Lambda _{7}^{3}$.
Given (\ref{torsphiW}), we can write 
\begin{equation}
\nabla _{a}\varphi _{bcd}=T_{a}^{\ \ e}\psi _{ebcd}  \label{fulltorsion}
\end{equation}%
where $T_{ab}$ is the \emph{full torsion tensor}. From this we can also
write 
\begin{equation}
T_{a}^{\ m}=\frac{1}{24}\left( \nabla _{a}\varphi _{bcd}\right) \psi ^{mbcd}.
\label{tamphipsi}
\end{equation}%
This $2$-tensor fully defines $\nabla \varphi $ since pointwise, it has 49
components and the space $W$ is also 49-dimensional (pointwise). In general
we can split $T_{ab}$ according to representations of $G_{2}$ into \emph{%
torsion components}: 
\begin{equation}
T=\tau _{1}g+\tau _{7}\lrcorner \varphi +\tau _{14}+\tau _{27}
\label{torsioncomps}
\end{equation}%
where $\tau _{1}$ is a function, and gives the $\mathbf{1}$ component of $T$%
. We also have $\tau _{7}$, which is a $1$-form and hence gives the $\mathbf{%
7}$ component, and, $\tau _{14}\in \Lambda _{14}^{2}$ gives the $\mathbf{14}$
component and $\tau _{27}$ is traceless symmetric, giving the $\mathbf{27}$
component. Hence we can split $W$ as 
\begin{equation}
W=W_{1}\oplus W_{7}\oplus W_{14}\oplus W_{27}.  \label{Wsplit}
\end{equation}%
As it was originally shown by Fern\'{a}ndez and Gray \cite{FernandezGray},
there are in fact a total of 16 torsion classes of $G_{2}$-structures that
arise as the subsets of $W$ to which $\nabla \varphi $ belongs. Moreover, as
shown in \cite{karigiannis-2007}, the torsion components $\tau _{i}$ relate
directly to the expression for $d\varphi $ and $d\psi $. In fact, in our
notation, 
\begin{subequations}%
\label{dptors} 
\begin{eqnarray}
d\varphi &=&4\tau _{1}\psi -3\tau _{7}\wedge \varphi -\ast \tau _{27}
\label{dphitors} \\
d\psi &=&-4\tau _{7}\wedge \psi -2\ast \tau _{14}.  \label{dpsitors}
\end{eqnarray}%
\end{subequations}%
Note that in the literature (\cite{bryant-2003,CleytonIvanovConf}, for
example) a slightly different convention for torsion components is sometimes
used. Our $\tau _{1}$ then corresponds to $\frac{1}{4}\tau _{0}$, $\tau _{7}$
corresponds to $-\tau _{1}$ in their notation, $\tau _{27}$ corresponds to $%
-\tau _{3}$ and $\tau _{14}$ corresponds to $-\frac{1}{2}\tau _{2}$.
Similarly, our torsion classes $W_{1}\oplus W_{7}\oplus W_{14}\oplus W_{27}$
correspond to $W_{0}\oplus W_{1}\oplus W_{2}\oplus W_{3}$.

An important special case is when the $G_{2}$-structure is said to be
torsion-free, that is, $T=0$. This is equivalent to $\nabla \varphi =0$ and
also equivalent, by Fern\'{a}ndez and Gray, to $d\varphi =d\psi =0$.
Moreover, a $G_{2}$-structure is torsion-free if and only if the holonomy of
the corresponding metric is contained in $G_{2}$ \cite{Joycebook}. The
holonomy group is then precisely equal to $G_{2}$ if and only if the
fundamental group $\pi _{1}$ is finite.

The torsion tensor $T_{ab}$ and hence the individual components $\tau _{1}$,$%
\tau _{7}$,$\tau _{14}$ and $\tau _{27}$ must also satisfy certain
differential conditions. For the exterior derivative $d$, $d^{2}=0$, so from
(\ref{dptors}), must have%
\begin{subequations}%
\label{dtabcond} 
\begin{eqnarray}
d\left( 4\tau _{1}\psi -3\tau _{7}\wedge \varphi -\ast \tau _{27}\right) &=&0
\\
d\left( 4\tau _{7}\wedge \psi +2\ast \tau _{14}\right) &=&0
\end{eqnarray}%
\end{subequations}%
These conditions are explored in more detail in \cite{GrigorianG2Torsion1}.
However for some simple torsion classes, the conditions on torsion
components simplify. In particular, if the torsion is in the class $W_{1}$,
so that only the $\tau _{1}$ component is non-vanishing, we just get the
condition ${\small d\tau }_{1}{\small =0}$. Similarly, for the $W_{7}$ class
the condition is ${\small d\tau }_{7}{\small =0}$. For the class ${\small W}%
_{1}{\small \oplus W}_{7}$, both $\tau _{1}$ and $\tau _{7}$ are
non-vanishing, and the condition is $d\tau _{1}=\tau _{1}\tau _{7}$. So if $%
\tau _{1}$ is nowhere zero, we have 
\begin{equation}
\tau _{7}=d\left( \log \tau _{1}\right) .  \label{w1w7torsioncond}
\end{equation}%
However if $\tau _{1}$ does vanish somewhere, it was shown in \cite%
{CleytonIvanovConf} that it must in fact vanish identically, and so the
torsion class reduces to $W_{7}$.

\section{Deformations of $G_{2}$-structures}

\label{secdeform}\setcounter{equation}{0}Suppose we have a $G_{2}$-structure
on $M$ defined by the $3$-form $\varphi $, and we want to obtain a new $%
G_{2} $-structure $\tilde{\varphi}$ by adding another $3$-form $\chi $ 
\begin{equation}
\varphi \longrightarrow \tilde{\varphi}=\varphi +\chi  \label{phideform1}
\end{equation}%
There are a number of challenges associated with this. Firstly, for a
generic $3$-form $\chi $, the $3$-form $\tilde{\varphi}$ may not even define
a $G_{2}$-structure. In order for $\tilde{\varphi}$ to define a $G_{2}$%
-structure it has to be a positive $3$-form. In this case, as shown in (\cite%
{GrigorianYau1}), $\tilde{\varphi}$ defines a Riemannian metric $\tilde{g}$
given by 
\begin{equation}
\tilde{g}_{ab}=\left( \frac{\det g}{\det \tilde{g}}\right) ^{\frac{1}{2}}%
\tilde{s}_{ab}  \label{gtildedeform1}
\end{equation}%
for 
\begin{equation}
\tilde{s}_{ab}=g_{ab}+\frac{1}{2}\chi _{mn(a}\varphi _{b)}^{\ \ mn}+\frac{1}{%
8}\chi _{amn}\chi _{bpq}\psi ^{mnpq}+\frac{1}{24}\chi _{amn}\chi
_{bpq}\left( \ast \chi \right) ^{mnpq}  \label{sabgen}
\end{equation}%
In fact, $\tilde{\varphi}$ is positive if and only if $\tilde{s}$ is
positive-definite, so in general this gives some algebraic constraints on $%
\chi $. If we assume that $\tilde{\varphi}$ does in fact define a $G_{2}$%
-structure, the next question is the torsion class of the new $G_{2}$%
-structure. The metric $\tilde{g}$ defines a Levi-Civita connection $\tilde{%
\nabla}$, so the new torsion (with lowered indices) is 
\begin{equation}
\tilde{T}_{am}^{\ }=\frac{1}{24}\left( \tilde{\nabla}_{a}\tilde{\varphi}%
_{bcd}\right) \tilde{\psi}_{m}^{\ \ \tilde{b}\tilde{c}\tilde{d}}
\label{tabtilde}
\end{equation}%
Here $\tilde{\psi}=\tilde{\ast}\tilde{\varphi}$, the Hodge dual of $\tilde{%
\varphi}$ with the Hodge star $\tilde{\ast}$ being defined by the metric $%
\tilde{g}$. The tilded raised indices on $\tilde{\psi}$ denote indices
raised by $\tilde{g}$. In \cite{GrigorianG2Torsion1}, I derived an explicit
expression for $\tilde{T}$ in terms of the old torsion $T$ and the $3$-form $%
\chi $:

\begin{proposition}[\protect\cite{GrigorianG2Torsion1}]
Given a deformation of $\varphi $ as in (\ref{phideform1}), the full torsion 
$\tilde{T}$ of the new $G_{2}$-structure $\tilde{\varphi}$ is given by%
\begin{eqnarray}
\tilde{T}_{an} &=&\frac{1}{24}\left( \frac{\det g}{\det \tilde{g}}\right)
\left( \left( 24T_{a}^{\ m}+T_{a}^{\ e}\psi _{ebcd}\left( \ast \chi \right)
^{mbcd}+\right. \right.  \notag \\
&&\left. +\psi ^{mbcd}\nabla _{a}\chi _{bcd}+\nabla _{a}\chi _{bcd}\left(
\ast \chi \right) ^{mbcd}\right) \tilde{s}_{mn}  \label{Tanlow2} \\
&&-3\left( 4\varphi _{c}^{\ \ bd}+\varphi _{cpq}\ast \chi ^{pqbd}+\chi
_{cpq}\psi ^{pqbd}+\chi _{cpq}\left( \ast \chi \right) ^{pqbd}\right) \times
\notag \\
&&\left. \times \left( \delta _{n}^{c}\nabla _{b}s_{ad}-\frac{1}{9}\delta
_{a}^{c}\tilde{g}_{bn}\tilde{g}^{\tilde{p}\tilde{q}}\nabla _{d}\tilde{s}%
_{pq}\right) \right) .  \notag
\end{eqnarray}%
where $\tilde{s}_{ab}$ is given by (\ref{sabgen}).
\end{proposition}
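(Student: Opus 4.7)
The plan is to derive the formula directly from the definition (\ref{tabtilde}) by (i) swapping $\tilde\nabla$ for the known connection $\nabla$ via a difference tensor, (ii) converting the objects $\tilde\varphi,\tilde\psi,\tilde g,\tilde g^{-1}$ into expressions in $\varphi,\psi,\chi,g$ together with controlled determinant factors, and (iii) applying the torsion definition (\ref{fulltorsion}) and $G_{2}$ contraction identities to collect terms.

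First I would write
\begin{equation*}
\tilde\nabla_{a}\tilde\varphi_{bcd}=\nabla_{a}\tilde\varphi_{bcd}
-\Delta^{e}_{ab}\tilde\varphi_{ecd}-\Delta^{e}_{ac}\tilde\varphi_{bed}-\Delta^{e}_{ad}\tilde\varphi_{bce},
\end{equation*}
where $\Delta^{e}_{ab}=\tilde\Gamma^{e}_{ab}-\Gamma^{e}_{ab}=\tfrac{1}{2}\tilde g^{ef}\bigl(\nabla_{a}\tilde g_{bf}+\nabla_{b}\tilde g_{af}-\nabla_{f}\tilde g_{ab}\bigr)$. Since $\tilde\varphi=\varphi+\chi$, the first piece $\nabla_{a}\tilde\varphi_{bcd}$ produces $T_{a}^{\ e}\psi_{ebcd}+\nabla_{a}\chi_{bcd}$ via (\ref{fulltorsion}), which will be the source of the top line in (\ref{Tanlow2}) once contracted with $\tilde\psi_{n}^{\ \tilde b\tilde c\tilde d}$. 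For the Hodge dual, I use $\tilde\psi_{abcd}=\psi_{abcd}+(\ast\chi)_{abcd}+(\tilde\ast-\ast)(\varphi+\chi)_{abcd}$; the Jacobian $\sqrt{\det\tilde g/\det g}$ combined with the relation $\tilde g^{ab}=(\det\tilde g/\det g)^{1/2}\tilde s^{ab}$ coming from (\ref{gtildedeform1}) converts all raised $\tilde g$-indices into $\tilde s^{-1}$ contractions, producing an overall $(\det g/\det\tilde g)$ factor (after the four $\tilde g^{-1}$ raisings combine with $\tilde\ast$). This is how the prefactor in (\ref{Tanlow2}) appears, and this is also where the factor $\tilde s_{mn}$ in the first bracketed line comes from: raising then immediately lowering with $\tilde g$ leaves $\tilde s_{mn}$ behind.

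Next I expand the $\Delta$-terms. From (\ref{sabgen}) we have $\tilde g_{ab}=(\det g/\det\tilde g)^{1/2}\tilde s_{ab}$, so $\nabla_{a}\tilde g_{bc}$ splits into a pure trace part (derivative of the conformal factor) and $\nabla_{a}\tilde s_{bc}$ modulo that factor. The trace part can be expressed through $\nabla_{a}\log\det\tilde g=\tilde g^{ef}\nabla_{a}\tilde g_{ef}$, which upon solving yields $\nabla_{a}\log(\det g/\det\tilde g)^{1/2}=-\tfrac{1}{9}\tilde g^{\tilde p\tilde q}\nabla_{a}\tilde s_{pq}$ (the $\tfrac{1}{9}$ arises because the $7$-dimensional trace of $\tilde g^{-1}\tilde g=\mathrm{Id}$ contributes $7$, and combining with the exponent $1/2$ and the $-\tfrac{1}{2}$ on the other side gives $1/9$ after using $\tilde g^{ab}\tilde g_{ab}=7$ and the conformal split). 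This is precisely the origin of the $-\tfrac{1}{9}\delta^{c}_{a}\tilde g_{bn}\tilde g^{\tilde p\tilde q}\nabla_{d}\tilde s_{pq}$ term in the second bracket of (\ref{Tanlow2}).

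Finally I substitute the expanded $\Delta$-terms into $-\Delta^{e}_{a(b}\tilde\varphi_{|e|cd)}$ and contract with $\tilde\psi_{n}^{\ \tilde b\tilde c\tilde d}$. Using $\tilde\varphi=\varphi+\chi$ and $\tilde\psi=\psi+\ast\chi+\cdots$, the resulting contraction $\tilde\varphi_{ecd}\tilde\psi_{n}^{\ \tilde e\tilde c\tilde d}$ reorganises (via the $G_{2}$ identity $\varphi_{ecd}\psi_{n}^{\ ecd'd}$-type contractions) into the bracketed combination $4\varphi_{c}^{\ bd}+\varphi_{cpq}\ast\chi^{pqbd}+\chi_{cpq}\psi^{pqbd}+\chi_{cpq}(\ast\chi)^{pqbd}$ in the second line of (\ref{Tanlow2}), the four terms being the $\varphi\psi$, $\varphi\ast\chi$, $\chi\psi$ and $\chi\ast\chi$ cross-contractions. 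Packaging everything with the common prefactor $\tfrac{1}{24}(\det g/\det\tilde g)$ yields (\ref{Tanlow2}).

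The main obstacle is the combinatorial bookkeeping at the contraction step: each of the four bilinear combinations of $\varphi$ and $\chi$ appearing in $\tilde\varphi\,\tilde\psi$ must be tracked through the antisymmetrisation $\Delta^{e}_{a(b}\cdots_{cd)}$, and one has to repeatedly invoke the $G_{2}$ identities expressing $\varphi\wedge(\cdot\lrcorner\psi)$-type contractions. Separating the trace contribution that gives the $\tfrac{1}{9}$-coefficient requires care, because it comes from a cancellation between the derivative of $\det\tilde g$ and the pure-trace piece of $\nabla\tilde s$, and mixing it up with the traceless piece would spoil the clean form of the second bracket.
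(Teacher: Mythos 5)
This proposition is imported verbatim from \cite{GrigorianG2Torsion1}; the present paper gives no proof of it, so there is nothing internal to compare your argument against. That said, your strategy is the natural (and, as far as the structure of the formula reveals, the intended) derivation, and it correctly accounts for every feature of (\ref{Tanlow2}): the difference tensor $\Delta^{e}_{ab}$ between the two Levi--Civita connections produces the $-3(\cdots)\nabla\tilde{s}$ block; $\nabla_{a}\tilde\varphi=T_{a}^{\ e}\psi_{ebcd}+\nabla_{a}\chi_{bcd}$ together with the identity $\psi_{ebcd}\psi^{mbcd}=24\delta_{e}^{m}$ gives the first bracket; writing $\tilde\psi$ with all indices raised so that $\tilde\psi^{\tilde a\tilde b\tilde c\tilde d}=\sqrt{\det g/\det\tilde g}\,(\psi+\ast\chi)^{abcd}$ and then lowering one index via $\tilde g_{na}=\sqrt{\det g/\det\tilde g}\,\tilde s_{na}$ is exactly where the overall $(\det g/\det\tilde g)$ and the $\tilde s_{mn}$ come from; and the $7$-dimensional trace of $\tilde g=(\det g/\det\tilde g)^{1/2}\tilde s$ is indeed the source of the $\tfrac19$ coefficient. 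What you have is an outline rather than a completed computation: the precise coefficients ($-3$, $4$, the distribution of residual conformal factors such as the $(\det g/\det\tilde g)^{1/2}$ that your formula for $\nabla_{a}\log(\det g/\det\tilde g)^{1/2}$ drops, and the different index routings $\delta_{n}^{c}$ versus $\delta_{a}^{c}$ in the last line) still have to be tracked through the $G_{2}$ contraction identities, as you acknowledge. I see no wrong idea or missing ingredient; to turn this into a verifiable proof you would need to carry out that bookkeeping, or else simply cite \cite{GrigorianG2Torsion1} as the paper does.
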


Using (\ref{Tanlow2}), it is then possible to extract the components of $%
\tilde{T}$ in $\tilde{W}_{1}\oplus \tilde{W}_{7}\oplus \tilde{W}_{14}\oplus 
\tilde{W}_{27}$ and hence determine the new torsion class. Note that since
the $G_{2}$-structure is different, the decomposition $\tilde{W}_{1}\oplus 
\tilde{W}_{7}\oplus \tilde{W}_{14}\oplus \tilde{W}_{27}$ differs from $%
W_{1}\oplus W_{7}\oplus W_{14}\oplus W_{27}$. An interesting question is
whether, given a $G_{2}$-structure in a specific torsion class, we can find
a $3$-form $\chi $ such that the new $G_{2}$-structure is in a strictly
smaller torsion class. From (\ref{Tanlow2}), this obviously involves solving
a non-linear differential equation for $\chi $, subject to algebraic
constraints that (\ref{sabgen}) is positive-definite. One way to simplify
the problem is to restrict the choice of $\chi .$ Using the original $G_{2}$%
-structure $\varphi $ we can decompose the $3$-form $\chi $ according to
representations of $G_{2}$. So in general it has a $\Lambda _{1}^{3}$
component that is proportional to $\varphi $, a $\Lambda _{7}^{3}$ component
that is of the form $v\lrcorner \psi $ for some vector $v$ and a more
complicated $\Lambda _{27}^{3}$ component. For a generic $\chi $, many of
the difficulties come from the $\Lambda _{27}^{3}$ component. These of
course can be avoided if we only consider deformations by $3$-forms that
have components only in either $\Lambda _{1}^{3}$ or $\Lambda _{7}^{3}$.

A deformation by a $3$-form in $\Lambda _{1}^{3}$ is equivalent to a
conformal transformation. So let $\chi =\left( f^{3}-1\right) \varphi $, so
that $\tilde{\varphi}=f^{3}\varphi $. Clearly $\tilde{\varphi}$ still
defines a $G_{2}$-structure. Then from (\ref{sabgen}) and (\ref%
{gtildedeform1}) we get 
\begin{equation}
s_{ab}=f^{9}g_{ab}\ \text{and thus, }\tilde{g}_{ab}=f^{2}g_{ab}.
\label{sconf}
\end{equation}%
Substituting into (\ref{Tanlow2}), we find that 
\begin{equation}
\tilde{T}=fT-df\lrcorner \varphi  \label{conftranstors}
\end{equation}%
In particular, we see that such a transformation only affects the $W_{7}$
component of the torsion. Moreover, if $T$ has a $W_{7}$ component $\tau
_{7} $ that is an exact form, then we can always find a function $f$ so that
a conformal transformation will remove this torsion component.

As an example, suppose $\varphi $ has torsion in the strict class $%
W_{1}\oplus W_{7}$. Then from (\ref{w1w7torsioncond}), we know that $\tau
_{7}=d\left( \log \tau _{1}\right) $. Hence if we take $f=\frac{\tau _{1}}{C}
$ for any non-zero constant $C$, the new torsion will be 
\begin{equation}
\tilde{T}=\frac{\tau _{1}}{C}\tau _{1}g=C\left( \frac{\tau _{1}}{C}\right)
^{2}g=C\tilde{g}
\end{equation}%
and thus in the $\tilde{W}_{1}$ class. Therefore, the conformal
transformation $\tilde{\varphi}=\left( \frac{\tau _{1}}{C}\right)
^{3}\varphi $ reduces the class $W_{1}\oplus W_{7}$ to $W_{1}$. Conversely,
a conformal transformation of the $W_{1}$ class will result in $W_{1}\oplus
W_{7}$. Since $G_{2}$-structures in the $W_{1}$ class are sometimes called 
\emph{nearly parallel}, the $G_{2}$-structures in the strict $W_{1}\oplus
W_{7}$ class are referred to as \emph{conformally nearly parallel}. If $%
W_{1}=0$, then we just have the $W_{7}$ class. In this case, we know that $%
\tau _{7}$ is closed. So by the Poincar\'{e} Lemma, we can at least locally
find a function $h$ such that $dh=\tau _{7}\,.$ By taking a conformal
transformation with $f=e^{h}$, we can thus remove the torsion locally. Hence
the $W_{7}$ class is sometimes called \emph{locally conformally parallel. }

Now suppose we look at deformations where $\chi _{bcd}=v^{e}\psi _{bcde}^{\
\ \ \ \ \ }\in \Lambda _{7}^{3}$. It was shown by Karigiannis in \cite%
{karigiannis-2005-57}, that in this case, if we let $\left\vert v\right\vert
^{2}=M$, with respect to the old metric $g$, 
\begin{subequations}%
\label{pi7quants} 
\begin{eqnarray}
s_{ab} &=&\left( 1+M\right) g_{ab}-v_{a}v_{b}  \label{pi7sab} \\
\left( \frac{\det \tilde{g}}{\det g}\right) ^{\frac{1}{2}} &=&\left(
1+M\right) ^{\frac{2}{3}}  \label{pi7gtilde} \\
\tilde{g}_{ab} &=&\left( 1+M\right) ^{-\frac{2}{3}}\left( \left( 1+M\right)
g_{ab}-v_{a}v_{b}\right)   \label{pi7gdown} \\
\tilde{g}^{\tilde{a}\tilde{b}} &=&\left( 1+M\right) ^{-\frac{1}{3}}\left(
g^{mu}+v^{m}v^{u}\right)   \label{pi7gup}
\end{eqnarray}

\end{subequations}
Note that the deformed metric defined above is always positive definite. To
see this, suppose $\xi ^{a}$ is some vector, then 
\begin{equation}
\tilde{g}_{ab}\xi ^{a}\xi ^{b}=\left( 1+\left\vert v\right\vert ^{2}\right)
^{-\frac{2}{3}}\left( \left\vert \xi \right\vert ^{2}+\left\vert
v\right\vert ^{2}\left\vert \xi \right\vert ^{2}-\left( v_{a}\xi ^{a}\right)
^{2}\right) \geq 0  \label{pi7deformposdef}
\end{equation}%
since $\left( v_{a}\xi ^{a}\right) ^{2}\leq \left\vert v\right\vert
^{2}\left\vert \xi \right\vert ^{2}$. Therefore, under such a deformation,
the $3$-form $\tilde{\varphi}$ is always a positive $3$-form, and thus
indeed defines a $G_{2}$-structure.

In \cite{GrigorianG2Torsion1}, the expression for the new torsion was
derived using (\ref{pi7quants}) and (\ref{Tanlow2}). This is a very long and
messy expression, which we will not reproduce here, but it gives the new
torsion in terms of the old torsion components and $\nabla v$, which was
also decomposed according to $G_{2}$-representations as 
\begin{equation}
\nabla v=v_{1}g+v_{7}\lrcorner \varphi +v_{14}+v_{27}  \label{delvsplit}
\end{equation}%
The expression for $\tilde{T}$ was then inverted to obtain equations for $%
v_{1}$, $v_{7}$, $v_{14}$ and $v_{27}$ in terms of the old and new torsion
components. By analyzing the equations for $\nabla v$ in the case when the
original torsion lies in the class $W_{1}\oplus W_{7}$, it was shown that
the new torsion vanishes if and only if the original $G_{2}$-structure was
also torsion free, and moreover $\nabla v=0$. Similarly it was shown that
there are no deformations of this type which preserve the strict $W_{1}$
torsion class. Here we will attempt something different - what if the
torsion is in the class $W_{1}\oplus W_{7}$ and we want to obtain the class $%
W_{1}$. From the expression (\ref{conftranstors}) we already know that this
is possible to do with a conformal transformation. However if it were
possible to go from $W_{1}\oplus W_{7}$ to $W_{1}$ using $\chi \in \Lambda
_{7}^{3}$, then a composition of the two types of deformation would actually
give a much more complicated and interesting deformation that preserves the
class $W_{1}$.

\section{Conformally nearly parallel $G_{2}$-structures}

\label{seclam7deform}Suppose now we have a $G_{2}$-structure $\left( \varphi
,g\right) $ with torsion lying in the strict class $W_{1}\oplus W_{7}$ -that
is, both $\tau _{1}$ and $\tau _{7}$ are non-zero. We then deform $\varphi $
to $\tilde{\varphi}$ given by%
\begin{equation}
\tilde{\varphi}=\varphi +v^{e}\psi _{bcde}^{\ \ \ \ \ \ }.  \label{g2deform}
\end{equation}%
As we know from (\ref{pi7deformposdef}), the metric defined by $\tilde{%
\varphi}$ is positive definite, so $\tilde{\varphi}$ does indeed define a $%
G_{2}$-structure. From \cite{GrigorianG2Torsion1}, we can also write down
the torsion components of $\tilde{\varphi}$. As before, $M=\left\vert
v\right\vert ^{2}$ with respect to the old metric $g$, and $\nabla v$ is
decomposed into components as in (\ref{delvsplit}). Here we show the
expression for $\tilde{\tau}_{1}$ and $\tilde{\tau}_{7}$, the $1$- and $7$%
-dimensional components of the new torsion $\tilde{T}_{ab}$: 
\begin{eqnarray*}
\ \ \text{\ \ \ \ \ \ \ }\tilde{\tau}_{1} &=&\frac{\left( \left( 1+\frac{1}{7%
}M\right) \tau _{1}-v_{1}-\frac{6}{7}\left( \tau _{7}\right) ^{a}v_{a}+\frac{%
3}{7}\left( v_{7}\right) ^{a}v_{a}\right) }{\left( 1+M\right) ^{\frac{2}{3}}}
\\
\left( \tilde{\tau}_{7}\right) _{c} &=&\left( \tau _{7}\right) _{c}-\frac{1}{%
6}\varphi _{c}^{\ \ ab}\left( \tau _{7}\right) _{a}v_{b}+\frac{v_{c}\left(
6\tau _{1}-6\left( \tau _{7}\right) _{a}v^{a}-8v_{1}+3\left( v_{7}\right)
_{a}v^{a}\right) }{6\left( 1+M\right) } \\
&&-\frac{\left( 3\left( M+2\right) \left( v_{7}\right) _{c}+v^{a}\left(
v_{27}\right) _{ac}+\varphi _{ca}^{\ \ \ b}v^{a}\left( v_{27}\right)
_{bd}v^{d}+3\varphi _{cab}v^{a}\left( v_{7}\right) ^{b}\right) }{6\left(
1+M\right) }
\end{eqnarray*}%
The expressions for $\tilde{\tau}_{14}$ and $\tilde{\tau}_{27}$ can
similarly be written down in terms of $\tau _{1}$, $\tau _{7}$ and the
components of $\nabla v$, using the general results in \cite%
{GrigorianG2Torsion1}, however they are rather long and not very
enlightening. Also, as shown in \cite{GrigorianG2Torsion1}, the linear
equations for the torsion components can be solved for the components of $%
\nabla v$ in terms of the old torsion and the new torsion. Hence if we
require the new torsion to be in a specific torsion class, this would give
us a differential equation that $v$ has to satisfy. We will try to find
conditions that will have the $G_{2}$-structure in $W_{1}\oplus W_{7}$ class
to a $G_{2}$-structure that only has a $1$-dimensional torsion component.
Using the general expression for $\nabla v$ in \cite{GrigorianG2Torsion1},
and setting $\tau _{14}=\tau _{27}=0$ and $\tilde{\tau}_{7}=\tilde{\tau}%
_{14}=\tilde{\tau}_{27}=0$, we thus have:

\begin{proposition}
Suppose $\left( \varphi ,g\right) $ is $G_{2}$-structure with the only
non-vanishing torsion component $\tau _{1}$ and $\tau _{7}$. The new $G_{2}$%
-structure $\tilde{\varphi}$ obtained via the deformation (\ref{g2deform})
then has torsion in class $\tilde{W}_{1}$ with the non-vanishing component $%
\tilde{\tau}_{1}$ if and only if $v$ satisfies: 
\begin{eqnarray}
\ \ \ \ \ \ \ \ \nabla _{a}v_{b} &=&\left( \tau _{1}-\left( 1+M\right) ^{%
\frac{2}{3}}\tilde{\tau}_{1}-\left( \tau _{7}\right) _{c}v^{c}\right) g_{ab}
\label{fulldelvt17tt1} \\
&&+4\left( 1+M\right) ^{-\frac{1}{3}}\tilde{\tau}_{1}v_{a}v_{b}+\frac{1}{%
\left( M+9\right) }\left( -3\left( M-3\right) \left( \tau _{7}\right)
_{c}\varphi _{\ \ ab}^{c}\right.  \notag \\
&&-\left( M+33\right) v_{a}\left( \tau _{7}\right) _{b}+3\left( 1+M\right)
\left( \tau _{7}\right) _{a}v_{b}  \notag \\
&&-\frac{1}{3}v^{c}\varphi _{cab}\left( 9\tau _{1}-4\tilde{\tau}_{1}\left(
M+9\right) \left( 1+M\right) ^{-\frac{1}{3}}+\tau _{1}M-12\left( \tau
_{7}\right) _{d}v^{d}\right)  \notag \\
&&\left. +12v_{a}\varphi _{\ \ \ b}^{cd}\left( \tau _{7}\right)
_{c}v_{d}-12v_{b}\varphi _{\ \ \ a}^{cd}\left( \tau _{7}\right)
_{c}v_{d}+12\left( \tau _{7}\right) _{c}v_{d}\psi _{\ \ \ ab}^{cd}\right) 
\notag
\end{eqnarray}%
\qquad
\end{proposition}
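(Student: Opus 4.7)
The strategy is to specialize the general inversion formulas of \cite{GrigorianG2Torsion1}. Substituting $\chi_{bcd}=v^{e}\psi_{bcde}$ into the master formula (\ref{Tanlow2}) and using the identities (\ref{pi7quants}), each of the four irreducible components $\tilde\tau_1,\tilde\tau_7,\tilde\tau_{14},\tilde\tau_{27}$ of the new torsion tensor $\tilde T_{ab}$ becomes a linear expression in the four pieces $v_1,v_7,v_{14},v_{27}$ of $\nabla v$ from (\ref{delvsplit}), with coefficients that are polynomial in $\tau_1,\tau_7,v$ and $M=|v|^{2}$. Under the standing hypothesis $\tau_{14}=\tau_{27}=0$ the input torsion contributes only via $\tau_1$ and $\tau_7$; the first two of these linear relations are precisely the displayed formulas for $\tilde\tau_1$ and $\tilde\tau_7$ recorded just above the proposition, and the remaining two for $\tilde\tau_{14}$ and $\tilde\tau_{27}$ are the longer but analogous expressions omitted from the text but contained in the general results of \cite{GrigorianG2Torsion1}.

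To force the new torsion into $\tilde W_1$, I would impose the three conditions $\tilde\tau_7=\tilde\tau_{14}=\tilde\tau_{27}=0$. Because the decomposition $\nabla v=v_1 g+v_7\lrcorner\varphi+v_{14}+v_{27}$ is $G_2$-irreducible and orthogonal, each $\tilde\tau_i$ couples invertibly to its like-type partner $v_i$ (modulo lower-order mixing with $v$, $\tau_1$, $\tau_7$). Solving the resulting block-linear system, as has already been carried out in \cite{GrigorianG2Torsion1}, yields explicit expressions for $v_1,v_7,v_{14},v_{27}$ in terms of $\tau_1$, $\tau_7$, $v$ and the single scalar parameter $\tilde\tau_1$. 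Reassembling
\[
\nabla_a v_b = v_1\,g_{ab}+(v_7)^{c}\varphi_{cab}+(v_{14})_{ab}+(v_{27})_{ab},
\]
and collecting the result by tensorial structure (terms in $g_{ab}$, in $v_a v_b$, in $v_a(\tau_7)_b$ and $(\tau_7)_a v_b$, in $v^c\varphi_{cab}$ and $(\tau_7)^c\varphi_{cab}$, and in the bilinear combinations involving $\psi$ and $v\otimes\tau_7$), should reproduce exactly the right-hand side of (\ref{fulldelvt17tt1}).

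The principal obstacle is purely algebraic bookkeeping. The formulas for $\tilde\tau_{14}$ and $\tilde\tau_{27}$ contain many contractions of $v$ with $\varphi$ and $\psi$, and isolating from them the clean $v_{14}$ and $v_{27}$ blocks requires systematic use of the standard $G_2$ contraction identities for $\varphi\varphi$, $\varphi\psi$ and $\psi\psi$. The overall factor $(M+9)^{-1}$ which multiplies most of the right-hand side of (\ref{fulldelvt17tt1}) is the signature of inverting the $\Lambda_{14}^{2}$ block in the presence of the deformed metric; the constant $9$ arises from the contraction identities mixing the $v$-dependent summands against the original $\varphi$. The converse direction needs no separate argument: projecting the right-hand side of (\ref{fulldelvt17tt1}) back onto the four irreducible pieces recovers the values of $v_1,v_7,v_{14},v_{27}$ chosen in the previous step, and substituting them into the formulas for $\tilde\tau_7,\tilde\tau_{14},\tilde\tau_{27}$ yields zero by construction, so $\tilde T\in\tilde W_1$ as required.
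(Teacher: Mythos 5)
Your outline matches the paper's own (implicit) argument: the paper offers no separate proof of this proposition, obtaining it exactly as you propose by specializing the general torsion-inversion formulas of \cite{GrigorianG2Torsion1} to $\chi=v\lrcorner\psi$ with $\tau_{14}=\tau_{27}=0$ and imposing $\tilde{\tau}_{7}=\tilde{\tau}_{14}=\tilde{\tau}_{27}=0$. The one inaccuracy is your description of the linear system as block-diagonal with each $\tilde{\tau}_{i}$ pairing only with its like-type $v_{i}$ --- the displayed formula for $\tilde{\tau}_{7}$ already shows genuine mixing of $v_{1}$, $v_{7}$ and $v_{27}$ through contractions with $v$ --- but since you defer the actual inversion to the cited reference, exactly as the paper does, this does not affect the argument.
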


Note that $\tilde{\tau}_{1}$ has to be a constant due to the conditions on
the torsion (\ref{dtabcond}). While it is too difficult to solve equation (%
\ref{fulldelvt17tt1}) directly, we can obtain conditions under which the
equation is at least consistent. In this equation $v$ has lowered indices,
so we can consider this as a $1$-form $v^{\flat }$. Then 
\begin{equation*}
\left( dv^{\flat }\right) _{ab}=2\nabla _{\lbrack a}v_{b]}
\end{equation*}%
However $d^{2}=0$, and thus the exterior derivative applied to the
anti-symmetrization of (\ref{fulldelvt17tt1}) must give zero. From these
considerations we get the consistency conditions in Proposition \ref%
{propddvcond} below. The extra equations which we get from the consistency
conditions are very important to simplify the equation (\ref{fulldelvt17tt1}%
).

\begin{proposition}
\label{propddvcond}The equation (\ref{fulldelvt17tt1}) is consistent with
the necessary condition $d^{2}v^{\flat }=0$ if and only if all of the
following conditions are satisfied:

\begin{enumerate}
\item For some smooth function $V$, 
\begin{equation}
v^{\flat }=V\tau _{7}  \label{vproptau7}
\end{equation}

\item The $7$-dimensional component $\tau _{7}$ of the original torsion
satisfies 
\begin{equation}
\nabla \tau _{7}=-\frac{1}{4}\frac{\left( V^{2}\left\vert \tau
_{7}\right\vert ^{2}-3\right) \left( V\tau _{1}+1\right) }{V^{2}}g+\frac{1}{6%
}\left( V^{2}\tau _{1}^{2}+6V\tau _{1}+3\right) \tau _{7}\otimes \tau _{7}
\label{deltau7eq}
\end{equation}

\item The $1$-dimensional component $\tilde{\tau}_{1}$ of the new torsion
satisfies 
\begin{equation}
\tilde{\tau}_{1}=\frac{1}{4V}\left( 1+V^{2}\left\vert \tau _{7}\right\vert
^{2}\right) ^{\frac{1}{3}}\left( V\tau _{1}-3\right)  \label{ttildecond}
\end{equation}
\end{enumerate}
\end{proposition}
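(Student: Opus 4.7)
The approach is to impose the integrability condition $d^2v^\flat=0$ on the equation (\ref{fulldelvt17tt1}) and decompose the resulting identity under the irreducible $G_2$-representations. First I would antisymmetrize (\ref{fulldelvt17tt1}) in $(a,b)$: the $g_{ab}$ and $v_a v_b$ contributions drop, and the remainder organizes into an explicit 2-form $(dv^\flat)_{ab}=2\nabla_{[a}v_{b]}$, which splits into $\Lambda^2_7$ and $\Lambda^2_{14}$ pieces via the projection formulas collected in \cite{GrigorianG2Torsion1, karigiannis-2007}.

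The next step is to apply $d$ to this 2-form and set the result equal to zero. Doing so brings in $\nabla v$ (which I resubstitute from (\ref{fulldelvt17tt1}) itself), together with $\nabla\tau_1$ and $\nabla\tau_7$. The first is fixed by the $W_1\oplus W_7$ compatibility $d\tau_1=\tau_1\tau_7$ from (\ref{w1w7torsioncond}); $\nabla\tau_7$ is left as an unknown to be determined. After routine use of the standard $\varphi$- and $\psi$-contraction identities, the condition $d^2v^\flat=0$ reduces to a single 3-form equation, which I would decompose into its $\Lambda^3_1\oplus\Lambda^3_7\oplus\Lambda^3_{27}$ components. The $\Lambda^3_{27}$ piece, encoding a traceless symmetric 2-tensor built from $v$, $\tau_7$, and $\nabla\tau_7$, separates into a ``$v\otimes\tau_7$-algebraic'' part and a ``$\nabla\tau_7$-algebraic'' part; solving for $\nabla\tau_7$ forces $v_{(a}(\tau_7)_{b)}$ to be pure trace, i.e. $v^\flat$ pointwise proportional to $\tau_7$, which is (\ref{vproptau7}). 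Substituting $v^\flat=V\tau_7$ back into the system then pins down $\nabla\tau_7$ explicitly, producing (\ref{deltau7eq}), while the $\Lambda^3_1$ (scalar) component yields (\ref{ttildecond}) after using $M=V^2|\tau_7|^2$.

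The main obstacle is the sheer length of the expansion in the second step: once $\nabla v$ is resubstituted, $d(dv^\flat)$ becomes a long sum of terms contracted against $\varphi$ and $\psi$. What keeps this tractable is the availability of the explicit inverted formulas for the components $v_1, v_7, v_{14}, v_{27}$ of $\nabla v$ from \cite{GrigorianG2Torsion1}, together with the simplifications coming from $\tau_{14}=\tau_{27}=0$ in the source torsion and $\tilde{\tau}_7=\tilde{\tau}_{14}=\tilde{\tau}_{27}=0$ in the target, which annihilate large blocks of terms. For the converse direction, one assumes (\ref{vproptau7})--(\ref{ttildecond}) hold and verifies by direct substitution into (\ref{fulldelvt17tt1}) that the antisymmetric part is a closed 2-form, establishing the equivalence.
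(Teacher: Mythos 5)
Your proposal follows essentially the same route as the paper: impose $\nabla_{[a}\nabla_b v_{c]}=0$ on (\ref{fulldelvt17tt1}), resubstitute $\nabla v$ and use $d\tau_1=\tau_1\tau_7$ (and $d\tau_7=0$, $d\tilde\tau_1=0$), then decompose the resulting $3$-form into $\Lambda^3_1\oplus\Lambda^3_7\oplus\Lambda^3_{27}$ and read off the three conditions. The only divergence is bookkeeping: in the paper the proportionality $v^\flat=V\tau_7$ comes from the $\Lambda^3_7$ (vector) equations together with contractions of the $\Lambda^3_{27}$ part with $v$, and (\ref{ttildecond}) from equating two derived expressions for $\left\vert\tau_7\right\vert^2$, rather than from the $\Lambda^3_{27}$ and $\Lambda^3_1$ pieces alone as you suggest, but this does not affect the soundness of the strategy.
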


\begin{proof}
As outlined above, we apply the condition $\nabla _{\lbrack a}\nabla
_{b}v_{c]}=0$ to (\ref{fulldelvt17tt1}). During the simplification process
we apply (\ref{fulldelvt17tt1}) again, and moreover use the conditions $%
d\tau _{7}=0$, $d\tau _{1}=\tau _{1}\tau _{7}$ and $d\tilde{\tau}_{1}=0$. In
the end we obtain an expression for the $3$-form $d^{2}v^{\flat }$ in terms
of $v$, $\tau _{1}$, $\tilde{\tau}_{1}$, $\tau _{7}$ and $\nabla \tau _{7}$.
Since the whole $3$-form must vanish, so must the components of the $3$-form
in ~$\Lambda _{1}^{3}$, $\Lambda _{7}^{3}$ and $\Lambda _{27}^{3}$. So let $%
\xi _{1}$ be the scalar corresponding to the $\Lambda _{1}^{3}$ component
and let $\xi _{7}$ and $\xi _{27}$ be the vector and the antisymmetric
symmetric tensor corresponding to the $\Lambda _{7}^{3}$ and $\Lambda
_{27}^{3}$ components of $d^{2}v^{\flat }$, respectively.

Then by considering the equations $\xi _{1}=0$, $\left( \xi _{7}\right)
^{a}v_{a}=0$ and $\left( \xi _{27}\right) _{mn}v^{m}v^{n}=0$, we can express 
$\left( \nabla _{a}\left( \tau _{7}\right) _{b}\right) v^{a}v^{b}$, $\nabla
^{a}\left( \tau _{7}\right) _{a}$ and $\left\vert \tau _{7}\right\vert ^{2}$
in terms of $M,$ $\tau _{1},\tilde{\tau}_{1}$ and $\left\langle \tau
_{7},v\right\rangle $. In particular, we find that 
\begin{eqnarray}
\ \ \ \ \ \ \ \ \ \left\vert \tau _{7}\right\vert ^{2} &=&\frac{%
3\left\langle \tau _{7},v\right\rangle ^{2}\left( 3M^{2}-10M+51\right) }{%
\left( 7M^{2}-66M-9\right) M}  \label{t17tt1t7sq1} \\
&&-\frac{4}{3}\frac{\left\langle \tau _{7},v\right\rangle \left( M+9\right)
^{2}\left( \tau _{1}\left( 1+M\right) ^{\frac{1}{3}}-4\tilde{\tau}%
_{1}\right) }{\left( 1+M\right) ^{\frac{1}{3}}\left( 7M^{2}-66M-9\right) } 
\notag \\
&&+\frac{2}{9}\frac{\tau _{1}^{2}M\left( M+9\right) ^{2}}{7M^{2}-66M-9}-%
\frac{16}{9}\frac{\tau _{1}\tilde{\tau}_{1}M\left( M+9\right) ^{2}}{\left(
1+M\right) ^{\frac{1}{3}}\left( 7M^{2}-66M-9\right) }  \notag \\
&&+\frac{32}{9}\frac{\tilde{\tau}_{1}^{2}M\left( M+9\right) ^{2}}{\left(
1+M\right) ^{\frac{2}{3}}\left( 7M^{2}-66M-9\right) }  \notag
\end{eqnarray}%
Further, we can consider the vector equations $\xi _{7}^{d}=0$, $\varphi
_{abc}v^{b}\xi _{7}^{c}=0,$ $\left( \xi _{27}\right) _{mn}v^{n}=0$ and $%
\varphi _{abc}$ $\left( \xi _{27}\right) _{\ \ n}^{b}v^{n}v^{c}=0.$ From
these, in particular, we find 
\begin{equation}
\tau _{7}=\frac{\left\langle \tau _{7},v\right\rangle }{M}v\ \ \text{and\ }%
\left\vert \tau _{7}\right\vert ^{2}=\frac{\left\langle \tau
_{7},v\right\rangle ^{2}}{M}.  \label{t17tt1t7sq2}
\end{equation}%
Equating (\ref{t17tt1t7sq1}) and (\ref{t17tt1t7sq2}), and solving for $%
\tilde{\tau}_{1}^{2}$, we obtain an expression for $\tilde{\tau}_{1}$ in
terms of $\tau _{1}$, $\tau _{7}$ and $v$.%
\begin{equation}
\tilde{\tau}_{1}=\frac{1}{4}\frac{\left( 1+M\right) ^{\frac{1}{3}}\left(
M\tau _{1}-3\left\langle \tau _{7},v\right\rangle \right) }{M}
\label{t17tt1tt1sol}
\end{equation}%
It can be checked that this expression for $\tilde{\tau}_{1}$ is in fact
consistent with the assumption $d\tilde{\tau}_{1}=0$.

Next, from equations $\left( \xi _{27}\right) _{ab}=0,$ $\varphi _{\ \ \
(a}^{cd}\left( \xi _{27}\right) _{b)d}v_{c}=0$ and $\varphi _{a}^{\ \
cd}\varphi _{b}^{\ \ \ ef}v_{c}v_{e}\left( \xi _{27}\right) _{df}$, we
finally obtain an expression for $\nabla _{a}\left( \tau _{7}\right) _{b}$.
Using (\ref{t17tt1t7sq2}) and (\ref{t17tt1tt1sol}) to eliminate $\left\vert
\tau _{7}\right\vert ^{2}$ and $\tilde{\tau}_{1}$ from the resulting
expression, we overall get: 
\begin{equation}
\nabla \tau _{7}=-\frac{\left\langle \tau _{7},v\right\rangle \left(
M-3\right) \left( M\tau _{1}+\left\langle \tau _{7},v\right\rangle \right) }{%
4M^{2}}g+\frac{\left( \tau _{1}^{2}M^{2}+6\left\langle \tau
_{7},v\right\rangle M\tau _{1}+3\left\langle \tau _{7},v\right\rangle
^{2}\right) }{6\left\langle \tau _{7},v\right\rangle ^{2}}\tau _{7}^{2}
\label{t17tt1dt7}
\end{equation}%
Now since $v$ is proportional to $\tau _{7}$, let us write $v=V\tau _{7}$
for some smooth function $V$. Then 
\begin{equation}
M=\left\vert v\right\vert ^{2}=V^{2}\left\vert \tau _{7}\right\vert ^{2}\ \ 
\text{and}\ \ \left\langle \tau _{7},v\right\rangle =V\left\vert \tau
_{7}\right\vert ^{2}  \label{t17tt1MV}
\end{equation}%
\qquad Thus we get the expressions (\ref{deltau7eq}) and (\ref{ttildecond})
for $\nabla \tau _{7}$ and $\tilde{\tau}_{1}$ in terms of $V$.
\end{proof}

Now it is easy to see that if $v$ is proportional to $\tau _{7}$ and $\tilde{%
\tau}_{1}$ satisfies (\ref{ttildecond}), then the equation (\ref%
{fulldelvt17tt1}) for $v$ is equivalent to the equation (\ref{deltau7eq})
for $\tau _{7}.$ However since these conditions are required for the
consistency of (\ref{fulldelvt17tt1}), the equation (\ref{fulldelvt17tt1})
is in fact equivalent to (\ref{deltau7eq}) together with conditions (\ref%
{vproptau7}) and (\ref{ttildecond}). This is now something that we can
solve, however for that we will need the following lemma.

\begin{lemma}[\protect\cite{CheegerColding}]
\label{LemWarpProd}Let $M$ be a $n$-dimensional Riemannian manifold. Then
the metric $g$ satisfies 
\begin{equation}
\nabla _{a}\nabla _{b}h=\lambda g_{ab}  \label{metconfhess}
\end{equation}%
for functions $h$ and $\lambda $ if and only if the underlying smooth
manifold is $\left( a,b\right) \times N$, for a $\left( n-1\right) $%
-dimensional manifold $N$, with a warped product metric $g$ given by%
\begin{equation}
g=\frac{dh^{2}}{\left\vert \nabla h\right\vert ^{2}}+\left\vert \nabla
h\right\vert ^{2}\hat{g}
\end{equation}%
where $\hat{g}$ is the induced metric on the $\left( n-1\right) $%
-dimensional slices.
\end{lemma}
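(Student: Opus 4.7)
My plan is to prove both directions by working with the vector field $X = \nabla h$ and showing that the Hessian condition forces the manifold to be a warped product in geodesic coordinates adapted to $X$.

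For the ``if'' direction, I would start with the warped product metric $g = dh^2/|\nabla h|^2 + |\nabla h|^2\, \hat g$ and verify directly, using the Koszul formula or the Christoffel symbols in the coordinates $(h,y^i)$ where $y^i$ are coordinates on $N$, that $\nabla^2 h$ is proportional to $g$. The function $\lambda$ will turn out to be $\frac{1}{2}\partial_h(|\nabla h|^2) = |\nabla h|\,\partial_t|\nabla h|$ where $t$ denotes the arc-length along gradient curves.

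For the ``only if'' direction, which is the main content, I would carry out the following steps. First, rewriting the hypothesis as $\nabla_a X_b = \lambda g_{ab}$ with $X = \nabla h$, I would compute
\begin{equation*}
\nabla_X X = \lambda X, \qquad X(|X|^2) = 2\lambda |X|^2,
\end{equation*}
and conclude that the integral curves of $\hat X := X/|X|$ are geodesics (away from critical points of $h$). Second, for any vector $Y$ tangent to a level set of $h$, the computation
\begin{equation*}
Y(|X|^2) = 2 Y^a X^b \nabla_a X_b = 2\lambda\, g(Y,X) = 0
\end{equation*}
shows that $|X|$ is constant on level sets, hence a function of $h$ alone.

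Third, I would introduce Gaussian normal coordinates $(t, y^i)$ by flowing a chosen level set $N_0 = \{h = h_0\}$ along $\hat X$, so that $\partial_t = \hat X$ is unit and orthogonal to $\partial_{y^i}$; in these coordinates $g = dt^2 + g_{ij}(t,y)\, dy^i dy^j$ and $dh/dt = |X|$. Fourth, restricting the Hessian condition to tangential indices and computing $\nabla_i X_j$ via $\Gamma^t_{ij} = -\tfrac{1}{2}\partial_t g_{ij}$ yields
\begin{equation*}
\tfrac{1}{2}|X|\,\partial_t g_{ij} = \lambda\, g_{ij},
\end{equation*}
so $g_{ij}(t,y) = f(t)^2 \hat g_{ij}(y)$ for some function $f(t)$. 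Combining $f'/f = \lambda/|X|$ with $d|X|/dt = \lambda$ (obtained from $\partial_t(|X|^2) = 2\lambda |X|$) forces $f = C|X|$; absorbing $C$ into $\hat g$ and substituting $dt = dh/|X|$ gives the asserted warped product form.

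The main obstacle, conceptually, is step three: showing that the level sets of $h$ really do foliate a neighborhood by mutually diffeomorphic copies of a single $N$, rather than just being pointwise orthogonal to $X$. The fact that the flow of $\hat X$ preserves the level-set structure (because $\hat X(h) = |X| > 0$) and the constancy of $|X|$ on level sets together handle this, giving a well-defined projection onto $N$. Once those are in place, the remaining derivations are routine local calculations with Christoffel symbols.
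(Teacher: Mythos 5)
Your argument is correct, and it is the standard Obata/Brinkmann-type proof of this fact; the paper itself gives no proof, quoting the statement from Cheeger--Colding, so there is nothing to compare against beyond noting that your local computation is exactly what is needed. All the key steps check out: $\nabla_XX=\lambda X$ and $X(|X|^2)=2\lambda|X|^2$ give geodesic gradient flow lines of $\hat X=X/|X|$; $Y(|X|^2)=2\lambda g(Y,X)=0$ for $Y$ tangent to level sets makes $|X|$ a function of $h$; in Gaussian coordinates $\nabla_iX_j=\tfrac12|X|\partial_tg_{ij}=\lambda g_{ij}$ together with $\partial_t|X|=\lambda$ (which is the $tt$-component of the hypothesis, so $\lambda$ is also a function of $t$ alone) forces $g_{ij}=|X|^2\hat g_{ij}$ up to a constant absorbed into $\hat g$. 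The only caveats worth flagging: the statement implicitly requires $\nabla h\neq0$ (at a critical point the asserted metric form is meaningless, and your construction only works on the open set where $dh\neq0$), and the conclusion that the manifold is globally a product $(a,b)\times N$ rather than merely locally so needs connectedness of the level sets and completeness of the flow of $\hat X$; since the lemma is applied in the paper only to identify the local warped-product form of the metric, this does not affect its use.
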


\begin{theorem}
\label{ThmWarped}Consider a deformation of $\left( \varphi ,g\right) $ with $%
T_{ab}$ lying in the strict class $W_{1}\oplus W_{7}$ to $\left( \tilde{%
\varphi},\tilde{g}\right) $ with $\tilde{T}_{ab}$ lying in the class $\tilde{%
W}_{1}$. Then, such a deformation exists if and only if $M$ is a warped
product manifold $I\times _{f}N$ for some interval $I$ and $6$-dimensional
manifold $N\,$. There are three cases:

\begin{enumerate}
\item If $v^{\flat }=\frac{3}{\tau _{1}}\tau _{7}$, then for a $6$%
-dimensional metric $\hat{g}$, the original metric $g$ and new torsion $%
\tilde{\tau}_{1}$ must be given by 
\begin{eqnarray}
g &=&\frac{\tau _{7}^{2}}{\left\vert \tau _{7}\right\vert ^{2}}+\tau
_{1}^{-10}\left\vert \tau _{7}\right\vert ^{2}\hat{g}  \label{gwarped1a} \\
\tilde{\tau}_{1} &=&0
\end{eqnarray}

\item If $v^{\flat }=-\frac{3}{\tau _{1}}\tau _{7}$, then%
\begin{eqnarray}
g &=&\frac{\tau _{7}^{2}}{\left\vert \tau _{7}\right\vert ^{2}}+\tau
_{1}^{2}\left\vert \tau _{7}\right\vert ^{2}\hat{g}  \label{gwarped1} \\
\tilde{\tau}_{1}^{3} &=&\frac{\tau _{1}}{8}\left( \tau _{1}^{2}+9\left\vert
\tau _{7}\right\vert ^{2}\right) .  \label{ttildewarped1}
\end{eqnarray}

\item If $v^{\flat }=\frac{f}{\tau _{1}}\tau _{7}$ where $f^{2}=\frac{9A\tau
_{1}^{3}}{A\tau _{1}^{3}-1}$ for an arbitrary constant $A$, then 
\begin{eqnarray}
g &=&\frac{\tau _{7}^{2}}{\left\vert \tau _{7}\right\vert ^{2}}+\frac{\left(
f-3\right) ^{\frac{10}{3}}}{f^{\frac{2}{3}}\left( f+3\right) ^{\frac{2}{3}}}%
\left\vert \tau _{7}\right\vert ^{2}\hat{g}  \label{gwarped2} \\
\tilde{\tau}_{1}^{3} &=&\frac{1}{64}\frac{\left( f-3\right) ^{2}}{Af\left(
f^{2}-9\right) }\left( 1+\frac{9A\tau _{1}\left\vert \tau _{7}\right\vert
^{2}}{A\tau _{1}^{3}-1}\right)  \label{ttildewarped2}
\end{eqnarray}
\end{enumerate}
\end{theorem}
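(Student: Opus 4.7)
The plan is to exploit Proposition \ref{propddvcond} and reduce the problem to a Hessian equation of the form required by Lemma \ref{LemWarpProd}. Since the original torsion is in strict $W_{1} \oplus W_{7}$ with $\tau_{1}$ nowhere zero, relation (\ref{w1w7torsioncond}) gives $\tau_{7} = d\log \tau_{1}$, and hence $\nabla_{a}(\tau_{7})_{b} = \tau_{1}^{-1}\nabla_{a}\nabla_{b}\tau_{1} - (\tau_{7})_{a}(\tau_{7})_{b}$. Substituting into (\ref{deltau7eq}) yields
\begin{equation*}
\nabla_{a}\nabla_{b}\tau_{1} = -\frac{\tau_{1}(V^{2}|\tau_{7}|^{2}-3)(V\tau_{1}+1)}{4V^{2}}\,g_{ab} + \frac{\tau_{1}(V\tau_{1}+3)^{2}}{6}(\tau_{7})_{a}(\tau_{7})_{b}.
\end{equation*}
Looking for $h = \Phi(\tau_{1})$ satisfying $\nabla^{2}h = \lambda g$, a direct computation gives $\nabla_{a}\nabla_{b}h = \Phi''\tau_{1}^{2}(\tau_{7})_{a}(\tau_{7})_{b} + \Phi'\nabla_{a}\nabla_{b}\tau_{1}$, so that the $(\tau_{7})\otimes(\tau_{7})$ term in $\nabla^{2}h$ vanishes precisely when $6\tau_{1}\Phi''(\tau_{1}) + (V\tau_{1}+3)^{2}\Phi'(\tau_{1}) = 0$, leaving $\lambda = -\Phi'\tau_{1}(V^{2}|\tau_{7}|^{2}-3)(V\tau_{1}+1)/(4V^{2})$.

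The remaining task is to determine $V$. Differentiating $|\tau_{7}|^{2}$ via the equation above and using $\tau_{7} = d\log \tau_{1}$ shows that $|\tau_{7}|^{2}$ must be a function of $\tau_{1}$ alone. The scalar $\tilde{\tau}_{1}$ given by (\ref{ttildecond}) must be constant, since it is the sole torsion component in the strict $\tilde{W}_{1}$ class; imposing $d\tilde{\tau}_{1} = 0$ and combining with the Hessian ODE for $\Phi$ produces an ODE for $V(\tau_{1})$ whose solutions fall into exactly three branches: the constant solutions $V\tau_{1} \equiv \pm 3$ (Cases 1 and 2), and a one-parameter family with $V\tau_{1} = f$ non-constant satisfying $f^{2} = 9A\tau_{1}^{3}/(A\tau_{1}^{3}-1)$ for an arbitrary constant $A$ (Case 3). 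In Case 1 the factor $V\tau_{1}-3$ forces $\tilde{\tau}_{1} = 0$ by (\ref{ttildecond}), and the $\Phi$-ODE integrates to $\Phi(\tau_{1}) \propto \tau_{1}^{-5}$; in Case 2 the $(\tau_{7})\otimes(\tau_{7})$ coefficient in the displayed equation above vanishes automatically, so $\Phi(\tau_{1}) = \tau_{1}$ works directly; in Case 3 the $\Phi$-ODE can be integrated using the explicit expression for $f$.

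For each branch, I then compute $|\nabla h|^{2}$ and apply Lemma \ref{LemWarpProd} to write $g = dh^{2}/|\nabla h|^{2} + |\nabla h|^{2}\hat{g}$. Using $dh = \Phi'(\tau_{1})\tau_{1}\tau_{7}$, the first summand always simplifies to $\tau_{7}^{2}/|\tau_{7}|^{2}$, while the warping factor becomes respectively $\tau_{1}^{-10}|\tau_{7}|^{2}$, $\tau_{1}^{2}|\tau_{7}|^{2}$, and $(f-3)^{10/3}|\tau_{7}|^{2}/(f^{2/3}(f+3)^{2/3})$ after absorbing irrelevant scalar constants into $\hat{g}$. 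This produces the metric formulas (\ref{gwarped1a}), (\ref{gwarped1}), (\ref{gwarped2}), and substituting the constraint on $V$ back into (\ref{ttildecond}) yields the expressions (\ref{ttildewarped1}) and (\ref{ttildewarped2}) for $\tilde{\tau}_{1}$. The converse direction is immediate: on any of these warped-product backgrounds the required equations of Proposition \ref{propddvcond} hold by construction. The chief difficulty is the ODE analysis of the previous paragraph --- in particular, extracting the precise one-parameter family in Case 3 and verifying that $V\tau_{1} = \pm 3$ constitute separate branches rather than degenerate limits of Case 3.
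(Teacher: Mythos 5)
Your overall strategy is the paper's: reduce (\ref{deltau7eq}) to a Hessian equation $\nabla_a\nabla_b h=\lambda g_{ab}$ and invoke Lemma \ref{LemWarpProd}. Your variant of taking $h=\Phi(\tau_1)$ rather than the paper's $F(f)$ is legitimate; your displayed Hessian identity and the resulting ODE $6\tau_1\Phi''+(V\tau_1+3)^2\Phi'=0$ are correct, and in all three cases your $\Phi$ reproduces the paper's warp factors up to constants absorbed into $\hat g$ (e.g.\ in Case 3 one gets $\Phi'\propto(f-3)^2/f$, and $(\Phi'\tau_1)^2|\tau_7|^2$ agrees with (\ref{gwarped2}) because $f^2/(f^2-9)=A\tau_1^3$).

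The gap is at the one step that actually produces the three cases: the determination of $V$ (equivalently $f=V\tau_1$) as a function of $\tau_1$. You assert that the branches are $f\equiv\pm 3$ and $f^2=9A\tau_1^3/(A\tau_1^3-1)$, but you never derive the governing first-order relation, and the mechanism you name --- ``imposing $d\tilde\tau_1=0$ and combining with the Hessian ODE for $\Phi$'' --- is partly misattributed: the $\Phi$-ODE is solvable for any prescribed $V(\tau_1)$ and places no constraint on $V$ whatsoever. The paper obtains the missing relation by computing $dM$ in two ways, once directly from the full equation (\ref{fulldelvt17tt1}) for $\nabla v$ and once from $M=V^2|\tau_7|^2$ together with (\ref{deltau7eq}); this yields $dV=\frac{1}{6}V(3-V^2\tau_1^2)\tau_7$, hence $df=\frac{1}{6}(9f-f^3)\,d\log\tau_1$, whose solutions are exactly the three branches. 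Your alternative source, differentiating (\ref{ttildecond}) and imposing $d\tilde\tau_1=0$, does in fact reproduce the same equation for $dV$ (the cancellation checks out), but you would have to carry that computation out, verify that the coefficient of $dV$ in it is nonzero (it degenerates on the locus $V^2|\tau_7|^2(2f+3)=-9$), and treat separately the branch $f\equiv 3$, on which $\tilde\tau_1\equiv 0$ and the condition $d\tilde\tau_1=0$ is vacuous. Finally, the point you flag but leave open --- that $f\equiv\pm3$ are genuinely separate branches rather than limits of Case 3 --- is settled in the paper by observing from (\ref{t17tt1xsol}) that $f^2\to 9$ would force $|\tau_1|\to\infty$, so a smooth non-constant $f$ never attains $\pm3$. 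As written, the proposal reproduces the scaffolding of the argument but leaves its quantitative core unproved.
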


\begin{proof}
From the expression for $\nabla v$ (\ref{fulldelvt17tt1}), we find that 
\begin{equation}
dM=\frac{3}{2}\left( V^{3}\left\vert \tau _{7}\right\vert ^{2}\tau
_{1}+V^{2}\left\vert \tau _{7}\right\vert ^{2}+V\tau _{1}+1\right) \tau _{7}
\label{t17tt1dM1}
\end{equation}%
However, from (\ref{t17tt1MV}), 
\begin{equation}
dM=2V\left\vert \tau _{7}\right\vert ^{2}dV+2V^{2}\tau _{7}\lrcorner \left(
\nabla \tau _{7}\right)   \label{t17tt1dM2}
\end{equation}%
So equating (\ref{t17tt1dM1}) and (\ref{t17tt1dM2}), and using (\ref%
{deltau7eq}), we get an expression for $dV$:%
\begin{equation}
dV=\frac{1}{6}V\left( 3-V^{2}\tau _{1}^{2}\right) \tau _{7}  \label{t17tt1dV}
\end{equation}%
Now consider $d\left( V\tau _{1}\right) $. Using (\ref{t17tt1dV}) and the
fact that that $d\tau _{1}=\tau _{1}\tau _{7}$ we find that 
\begin{equation}
d\left( V\tau _{1}\right) =\frac{1}{6}\left( 9V\tau _{1}-V^{3}\tau
_{1}^{3}\right) \tau _{7}  \label{t17tt1dVt1}
\end{equation}%
Let $f=V\tau _{1}$, so we have the equation%
\begin{equation}
df=\frac{1}{6}\left( 9f-f^{3}\right) d\left( \log \tau _{1}\right) .
\label{dfdtau7}
\end{equation}%
Consider the constant solutions first. Since $\tau _{1}$ is non-zero, the
solution $f=0$ implies that $V=0$, and hence $v=0$, so this is a degenerate
solution. The non-trivial constant solutions are $f\equiv \pm 3.$ If $%
f=V\tau _{1}\equiv 3$, then in (\ref{ttildecond}), $\tilde{\tau}_{1}=0$. In
this case, from (\ref{deltau7eq}), we find 
\begin{equation}
\nabla \tau _{7}=\left( \frac{\tau _{1}^{2}}{3}-\left\vert \tau
_{7}\right\vert ^{2}\right) g+5\tau _{7}\otimes \tau _{7}  \label{delt7f3}
\end{equation}%
Now using (\ref{delt7f3}) and $d\tau _{1}=\tau _{1}\tau _{7}$ we can relate
the metric to the Hessian of a function via the following expression 
\begin{equation}
\nabla _{a}\nabla _{b}\left( \tau _{1}^{-5}\right) =-5\tau _{1}^{-5}\left( 
\frac{\tau _{1}^{2}}{3}-\left\vert \tau _{7}\right\vert ^{2}\right) g_{ab}.
\end{equation}%
Hence by Lemma \ref{LemWarpProd} we get (\ref{gwarped1a}). Now consider the
solution $f=V\tau _{1}\equiv -3$. In this case, from (\ref{ttildecond}) we
find 
\begin{equation}
V^{2}\left\vert \tau _{7}\right\vert ^{2}=-\frac{8\tilde{\tau}_{1}^{3}V^{3}}{%
27}-1.  \label{t17tt1vtay7}
\end{equation}%
Using the fact that $V=-\frac{3}{\tau _{1}}$ and (\ref{t17tt1vtay7}) in (\ref%
{deltau7eq}) we have 
\begin{equation}
\nabla \tau _{7}=\frac{2}{9\tau _{1}}\left( 2\tilde{\tau}_{1}^{3}-\tau
_{1}^{3}\right) g-\tau _{7}\otimes \tau _{7}  \label{t17tt1dt7b}
\end{equation}%
However, in the $W_{1}\oplus W_{7}$ class, $\tau _{7}=d\log \tau _{1}$, so
we rewrite (\ref{t17tt1dt7b}) in terms of $\tau _{1}$: 
\begin{equation}
\nabla _{a}\nabla _{b}\tau _{1}=\frac{2}{9}\left( 2\tilde{\tau}_{1}^{3}-\tau
_{1}^{3}\right) g_{ab}  \label{t17tt1hess1}
\end{equation}%
Hence, by Lemma \ref{LemWarpProd}, the metric must be (\ref{gwarped1}).
Using (\ref{t17tt1vtay7}), we also find that $\tilde{\tau}_{1}$ satisfies (%
\ref{ttildewarped1}).

Suppose now $f$ is non-constant. Then whenever the right-hand side of (\ref%
{dfdtau7}) is non-zero - that is, $f\neq \pm 3$, we can separate variables
in (\ref{dfdtau7}). Integrating, we obtain 
\begin{equation}
f^{2}=\frac{9A\tau _{1}^{3}}{A\tau _{1}^{3}-1}  \label{t17tt1xsol}
\end{equation}%
for some positive constant $A$. Suppose now $f=\pm 3$ at some point, then if 
$f$ is non-constant, we must have $f^{2}\longrightarrow 9$ in (\ref%
{t17tt1xsol}) for some values of $A$ and $\tau _{1}$. However, we can see
from (\ref{t17tt1xsol}) that this happens if and only if $\left\vert \tau
_{1}\right\vert \longrightarrow \infty $, but $\tau _{1}$ is smooth, so in
fact, either $f\equiv \pm 3$ (so these are singular solutions of \ref%
{dfdtau7}) or $f$ is nowhere equal to $\pm 3$ and is given by (\ref%
{t17tt1xsol}) everywhere. We have already covered the constant cases above,
so now we can assume that (\ref{t17tt1xsol}) holds everywhere. In particular
from (\ref{t17tt1xsol}) we also get the relations 
\begin{equation}
V^{2}=\frac{9A\tau _{1}}{A\tau _{1}^{3}-1}\ \text{and}\ \ V^{3}=Af\left(
f^{2}-9\right)  \label{vsqvcub}
\end{equation}%
From (\ref{ttildecond}), find can find $\left\vert \tau _{7}\right\vert ^{2}$
in terms of $f$ and $\tilde{\tau}_{1}^{3}$. Also, note that from (\ref%
{dfdtau7}) we get 
\begin{equation*}
\nabla _{a}\nabla _{b}f=-\frac{1}{6}f\left( f^{2}-9\right) \nabla _{a}\left(
\tau _{7}\right) _{b}+\frac{3\left( f^{2}-3\right) }{f\left( f^{2}-9\right) }%
\nabla _{a}f\nabla _{b}f.
\end{equation*}%
\qquad So overall, we can rewrite (\ref{deltau7eq}) as an equation for $f$
in the form 
\begin{equation}
\nabla _{a}\nabla _{b}f=P(f)g_{ab}+Q\left( f\right) \nabla _{a}f\nabla _{b}f
\label{exthessmet}
\end{equation}%
for some functions $P\left( f\right) $ and $Q\left( f\right) $. The exact
form of $P\left( f\right) $ is not very important, but $Q\left( f\right) $
is given by 
\begin{equation*}
Q\left( f\right) =\frac{2\left( f^{2}-3f-6\right) }{f\left( f^{2}-9\right) }.
\end{equation*}%
In order to reduce (\ref{exthessmet}) to the form (\ref{metconfhess}) we
need to find a function $F\left( f\right) $ that satisfies 
\begin{equation}
\frac{d^{2}F}{df^{2}}+\frac{dF}{df}Q\left( f\right) =0.  \label{d2Feq}
\end{equation}%
For such an $F$, the Hessian would be proportional to the metric. Let $G=%
\frac{dF}{df}$, then by separation of variables, we solve (\ref{d2Feq}) for $%
G$ and get the solution 
\begin{equation}
G=\frac{6\left( f-3\right) ^{\frac{2}{3}}}{f^{\frac{4}{3}}\left( f+3\right)
^{\frac{4}{3}}}
\end{equation}%
Note that we have chosen a constant factor for convenience. Since $f$ is
nowhere vanishing and is nowhere equal to $-3$, this can be integrated
further to find $F$. Hence for this $F$, $\nabla _{a}\nabla _{b}F$ is
proportional to $g_{ab}$. Therefore by Lemma \ref{LemWarpProd}, the metric
must be a warped product of the form 
\begin{equation}
g=\frac{1}{\left\vert \nabla F\right\vert ^{2}}dF^{2}+\left\vert \nabla
F\right\vert ^{2}\hat{g}  \label{warpg2}
\end{equation}%
Note that $\frac{dF}{\left\vert \nabla F\right\vert }=\frac{df}{\left\vert
\nabla f\right\vert }=\frac{\tau _{7}}{\left\vert \tau _{7}\right\vert }$,
and therefore, using (\ref{dfdtau7}), we get 
\begin{eqnarray*}
\left\vert \nabla F\right\vert ^{2} &=&\left( \frac{dF}{df}\right)
^{2}\left\vert \nabla f\right\vert ^{2}=\frac{1}{36}G^{2}\left\vert \tau
_{7}\right\vert ^{2}f^{2}\left( f^{2}-9\right) ^{2} \\
&=&\frac{\left( f-3\right) ^{\frac{10}{3}}}{f^{\frac{2}{3}}\left( f+3\right)
^{\frac{2}{3}}}\left\vert \tau _{7}\right\vert ^{2}
\end{eqnarray*}%
Thus we obtain the metric (\ref{gwarped2}). From (\ref{ttildecond}) we get
the expression (\ref{ttildewarped2}) for $\tilde{\tau}_{1}$ by substituting (%
\ref{vsqvcub}).
\end{proof}

It was shown in by Cleyton and Ivanov in \cite{CleytonIvanovConf} that by
considering a warped product of an open interval over a $6$-dimensional
nearly K\"{a}hler manifold, it is possible to obtain $7$-dimensional
manifolds with $G_{2}$-structures. Moreover, it has been shown that for such
a construction the $\tau _{14}$ torsion component ($\tau _{2}$ in the
notation of \cite{CleytonIvanovConf}) will always vanish, and moreover, it
is possible to find parameters such that the $\tau _{27}$ torsion component (%
$\tau _{3}$ in \cite{CleytonIvanovConf}) will also become zero, leaving only 
$\tau _{1}$ and $\tau _{7}$ non-zero ($\tau _{0}$ and $\tau _{1}$ in their
notation), which are given in terms of the warp factor. In particular, if
the metric is $dt^{2}+h\left( t\right) ^{2}\hat{g}$ for warp factor $h>0$,
then, using our conventions for the torsion components, 
\begin{equation}
\tau _{1}=h^{-1}\sigma \sin \theta \ \ \ \text{and }\tau _{7}=h^{-1}\left(
\sigma \cos \theta -h^{\prime }\right) dt  \label{warpedt1t7}
\end{equation}%
where $\sigma $ is a constant related to the scalar curvature of the $6$%
-dimensional manifold and $\theta \left( t\right) $ satisfies $\theta
^{\prime }=h^{-1}\sigma \sin \theta $. These are precisely the kind of
manifolds that appear as solutions in Theorem \ref{ThmWarped} - warped
product $7$-manifolds with torsion in $W_{1}\oplus W_{7}$. The warp factors
in Theorem \ref{ThmWarped} have to be consistent with the expressions (\ref%
{warpedt1t7}), and so for each case in Theorem \ref{ThmWarped} we get a
system of two first-order ODEs for $h$ and $\theta $. Given appropriate
initial conditions we can say that solutions exist, however the analysis of
these solutions is something to be investigated further. Therefore, we can
construct examples of $7$-manifolds with a conformally nearly parallel $%
G_{2} $-structure such that a non-infinitesimal deformation in $\Lambda
_{7}^{3}$ gives a $G_{2}$-structure in a strictly smaller torsion class.
Moreover, applying a conformal transformation (\ref{sconf}) we can obtain a
nearly parallel $G_{2}$-structure, for which a combination of a conformal
transformation and a $\Lambda _{7}^{3}$ deformation lead to another nearly
parallel $G_{2}$-structure. As we have seen, there are various non-trivial
relationships between different $G_{2}$-structures and it will be a subject
of further study whether it is possible find transformations between other $%
G_{2}$ torsion classes. In particular, so far we only have a grasp on
deformations that lie in $\Lambda _{1}^{3}$ and $\Lambda _{7}^{3}$, however
it is likely that deformations in $\Lambda _{27}^{3}$ could yield the most
interesting results.


\bibliographystyle{jhep-a}
\bibliography{refs2}

\end{document}